\numberwithin{equation}{subsection}
\newtheorem{theorem}[equation]{Theorem}
\newtheorem*{theorem*}{Theorem}
\newtheorem{lemma}[equation]{Lemma}
\newtheorem*{conjecture*}{Mamma Conjecture}
\newtheorem*{conjecture1*}{Mamma Conjecture (revisited)}
\newtheorem{proposition}[equation]{Proposition}
\newtheorem{corollary}[equation]{Corollary}
\newtheorem*{corollary*}{Corollary}
\theoremstyle{remark}
\newtheorem{definition}[equation]{Definition}
\newtheorem{definitions}[equation]{Definitions}
\newtheorem{examples}[equation]{Examples}
\newtheorem{notation}[equation]{Notation}
\theoremstyle{remark}
\newtheorem{remark}[equation]{Remark}
\newcommand{\ko}{\: , \;}
\newcommand{\cA}{{\mathcal A}}
\newcommand{\cB}{{\mathcal B}}
\newcommand{\cC}{{\mathcal C}}
\newcommand{\cD}{{\mathcal D}}
\newcommand{\cE}{{\mathcal E}}
\newcommand{\cF}{{\mathcal F}}
\newcommand{\cM}{{\mathcal M}}
\newcommand{\cS}{{\mathcal S}}
\newcommand{\cU}{{\mathcal U}}
\newcommand{\cV}{{\mathcal V}}
\newcommand{\bbC}{\mathbb{C}}
\newcommand{\bbD}{\mathbb{D}}
\newcommand{\bfE}{\mathbf{E}}
\newcommand{\bbK}{\mathbb{K}}
\newcommand{\bbL}{\mathbb{L}}
\newcommand{\bbR}{\mathbb{R}}
\newcommand{\bbT}{\mathbb{T}}
\newcommand{\bbQ}{\mathbb{Q}}
\newcommand{\bbZ}{\mathbb{Z}}
\DeclareMathOperator{\hocolim}{hocolim}
\DeclareMathOperator{\id}{id}
\DeclareMathOperator{\incl}{incl}
\DeclareMathOperator{\Or}{Or}
\DeclareMathOperator{\Ind}{Ind}
\DeclareMathOperator{\Mod}{Mod}
\DeclareMathOperator{\Mot}{Mot}
\DeclareMathOperator{\K}{K}
\DeclareMathOperator{\Ch}{Ch}
\newcommand{\KB}[1]{K(#1;\cB)}
\newcommand{\dgcat}{\mathsf{dgcat}}
\newcommand{\perf}{\mathsf{perf}_{\dg}}
\newcommand{\cat}{\mathsf{cat}}
\newcommand{\dgcatf}{\dgcat_{\mathsf{f}}}
\newcommand{\dgcatsf}{\dgcat_{\mathsf{sf}}}
\newcommand{\dg}{\mathsf{dg}}
\newcommand{\spe}{\mathsf{Spt}}
\newcommand{\Spe}{\mathsf{Spt}}
\newcommand{\uHom}{\underline{\mathsf{Hom}}}
\newcommand{\Hom}{\mathsf{Hom}}
\newcommand{\HomC}{\uHom_{\,!}}
\newcommand{\HomA}{\uHom_{\,\mathsf{add}}}
\newcommand{\Locuns}{\Loc_{\widetilde{\cE^s_{un}},\,p,\,\Sigma}}
\newcommand{\rep}{\mathsf{rep}_{\dg}}
\newcommand{\Cat}{\mathsf{Cat}}
\newcommand{\CAT}{\mathsf{CAT}}
\newcommand{\Fun}{\mathsf{Fun}}
\newcommand{\dgHo}{\mathsf{H}^0}
\newcommand{\dgZo}{\mathsf{Z}^0}
\newcommand{\Ho}{\mathsf{Ho}}
\newcommand{\HO}{\mathsf{HO}}
\newcommand{\op}{^{\mathsf{op}}}
\newcommand{\sSet}{\mathsf{sSet}}
\newcommand{\sSetp}{\mathsf{sSet}_{\bullet}}
\newcommand{\Map}{\mathsf{Map}}
\newcommand{\uMap}{\underline{\mathsf{Map}}}
\newcommand{\St}{\mathsf{St}}
\newcommand{\stab}{\mathsf{stab}}
\newcommand{\Loc}{\mathsf{L}}
\newcommand{\too}{\longrightarrow}
\newcommand{\isoto}{\stackrel{\sim}{\to}}
\newcommand{\isotoo}{\stackrel{\sim}{\longrightarrow}}
\newcommand{\Left}{\mathbb{L}}
\newcommand{\Right}{\mathbb{R}}
\newcommand{\LInd}{\Left\!\Ind}
\newcommand{\Efund}{\bfE^{\mathsf{fund}}}
\newcommand{\dgS}{\cS}
\newcommand{\dgD}{\cD}
\newcommand{\ie}{\textsl{i.e.}\ }
\newcommand{\eg}{\textsl{e.g.}}
\newcommand{\dgcell}{strictly finite dg cell}
\newcommand{\dgcells}{strictly finite dg cells}
\newcommand{\Madd}{\Mot^{\mathsf{add}}_{\dg}}
\newcommand{\cMadd}{\cM ot^{\,\mathsf{add}}_{\dg}}
\newcommand{\Uadd}{\cU^{\,\mathsf{add}}_{\,\dg}}
\begin{document}

\title[The fundamental isomorphism conjecture]{The fundamental isomorphism conjecture\\via non-commutative motives}
\author{Paul~Balmer and Gon{\c c}alo~Tabuada}

\address{Paul Balmer, Department of Mathematics, UCLA, Los Angeles, CA 90095-1555, USA}
\email{balmer@math.ucla.edu}
\urladdr{http://www.math.ucla.edu/$\sim$balmer}

\address{Gon{\c c}alo Tabuada, Department of Mathematics, MIT, Cambridge, MA 02139, USA}
\email{tabuada@math.mit.edu}
\urladdr{http://www.math.mit.edu/$\sim$tabuada}

\subjclass[2000]{18D20, 19D50, 19D55}
\date{\today}

\keywords{Farrell-Jones conjectures, assembly map, non-commutative motives, algebraic
$K$-theory, (topological) Hochschild homology, dg categories, Grothendieck derivators}

\thanks{The first author was supported by NSF grant {\tt DMS-0969644} and the second author by the Clay Mathematics Institute and by the FCT-Portugal grant {\tt PTDC/MAT/098317/2008}.}

\begin{abstract}
Given a group, we construct a fundamental additive functor on its orbit
category. We prove that any isomorphism conjecture valid
for this fundamental additive functor holds for all additive functors, like $K$-theory, cyclic homology, topological Hochschild homology, etc. Finally, we reduce
this fundamental isomorphism conjecture to $K$-theoretic ones.
\end{abstract}

\maketitle

\vskip-\baselineskip
\vskip-\baselineskip
%-----------------------------------------------------------------------
\section{Introduction and statement of results}
%-----------------------------------------------------------------------
%-----------------------------------------------------------------------
\subsection{Isomorphism conjectures}\label{sub:isomconj}
%-----------------------------------------------------------------------
The Farrell-Jones \emph{isomorphism conjectures} are important driving
forces in current mathematical research and imply well-known conjectures due to Bass, Borel, Kaplansky, Novikov; see a survey in L\"uck~\cite{ICMLuck}.

Given a group $G$, the Farrell-Jones conjectures predict
the value of algebraic $K$- and $L$-theory of the group ring $RG$
in terms of their values on the virtually cyclic subgroups of~$G$\,; here $R$ is a fixed base commutative ring. In~\cite{DL}, Davis and L{\"u}ck proposed
the following unified setting for these isomorphism
conjectures\,; see~\S\,\ref{se:DL}. Let $\cF$ be a family of subgroups of~$G$ and
$\bfE:\Or(G) \rightarrow \spe$ a functor from the orbit category
of~$G$ to spectra. The {\em
$(\bfE,\cF,G)$-assembly map} is the induced map
\begin{equation}\label{eq:ass}
\underset{\Or(G,\cF)}\hocolim\, \bfE \longrightarrow
\underset{\Or(G)}\hocolim\, \bfE = \bfE(G)\,,
\end{equation}
where $\Or(G,\cF)\subset\Or(G)$ is the orbit category restricted on~$\cF$.
We say that \emph{the functor $\bfE$ has the $\cF$-assembly property
for~$G$} when the map~\eqref{eq:ass} is a stable weak equivalence, \ie when it
induces an isomorphism on stable homotopy groups. When we speak of \emph{the
$(\bfE,\cF,G)$-isomorphism conjecture}, we refer to the expressed
hope that this property holds for a particular choice of $\bfE$,
$\cF$ and~$G$.
Davis and L{\"u}ck proved (see also \cite{HP} for details on the proof) that the Farrell-Jones
conjecture in $K$-theory for~$G$ is equivalent to the
$(\bbK,\cV C,G)$-isomorphism conjecture, where $\bbK$ is non-connective $K$-theory (see \S\,\ref{sub:nonconn}) and $\cV C$ the family of virtually cyclic subgroups of~$G$; and similarly for $L$-theory. The first step in their approach is the construction of a functor to $R$-linear categories
\begin{equation}\label{eq:comp}
\Or(G) \overset{\overline{?}}{\too} \mathsf{Grp} \overset{R[-]}{\too} R\text{-}\cat\,,
\end{equation}
composed of the transport groupoid functor and the $R$-linearization functor\,; see~\S\,\ref{se:DL}.

In addition, the literature contains many
variations on the above theme, replacing the $K$- and $L$-theory functors by other
functors~$\bfE$, and the category of spectra by other model categories~$\cM$. See for instance the isomorphism conjecture for homotopy $K$-theory ($KH$)~\cite[\S\,7]{BL}, for Hochschild homology ($HH$) and cyclic homology ($HC$)~\cite[\S\,1]{LR}, or for topological Hochschild homology ($THH$)~\cite[\S\,6]{Luck}. This simple idea of letting the functor $\bfE$ and the category $\cM$ float freely generates a profusion of potential isomorphism conjectures\,:
\begin{equation}\label{eq:skein}
\vcenter{
\xymatrix@C=8em@R=0.4em{
&& \spe
\\
&& \spe \kern-1em
\\
\Or(G) \ar[rruu]^(.7){\bbK}
 \ar[rru]^(.75){KH} \ar[rr]^(.8){HH}
 \ar[rrd]^(.75){HC} \ar[rrdd]^(.75){THH}  \ar[rrddd]^(.7){\bfE}_(.55){{\vdots}}
&& \Ch(R) \kern-2em
\\
&& \Ch(R) \kern-1em
\\
&& \spe\\
&& \cM \,,
}}
\end{equation}
where $\Ch(R)$ stands for the category of complexes of $R$-modules.
Each of these isomorphism conjectures has already been proved for large classes of groups using a variety of different methods.
Our goal in this article is not to prove any of these conjectures for any class of groups. We are rather interested
in the general organization and deeper properties behind this somewhat exuberant herd of conjectures. Our guiding questions are the following\,:

\medbreak
\noindent\textbf{Question~A\,: }\textit{Is there a fundamental isomorphism conjecture implying all others\,?}
\medbreak
\noindent\textbf{Question~B\,: }\textit{If this is the case, can this fundamental isomorphism conjecture be described solely in terms of classical invariants\,?}
\medbreak

We provide positive answers to those questions. As we shall see, the isomorphism conjecture for $K$-theory will play a central role, probably confirming the secret feelings of some experts. The precise formulation of our answers uses the theory of {\em non-commutative motives} initiated in~\cite{Thesis}. Connecting those techniques to the world of isomorphism conjectures is the technical bulk of the article. We believe that this bridge will encourage the adoption of these new techniques by researchers working on isomorphism conjectures.
%-----------------------------------------------------------------------
\subsection{Non-commutative motives}\label{sub:NCM}
%-----------------------------------------------------------------------
A {\em differential graded (=dg) category}, over our fixed base commutative ring $R$, is a category enriched over
cochain complexes of $R$-modules (morphisms sets are complexes)
in such a way that composition fulfills the Leibniz rule\,:
$d(f\circ g)=d(f)\circ g+(-1)^{\textrm{deg}(f)}f\circ(dg)$; see Keller~\cite{ICM} and \S\,\ref{se:dgcat} for further explanations about dg categories. There is a Quillen model structure on $\dgcat$, the category of small dg categories, with weak equivalences being derived Morita equivalences (see \S\,\ref{sub:Morita}).

Many of the classical invariants such as Hochschild and cyclic homology,
connective, non-connective, and homotopy $K$-theory, and even topological Hochschild homology, extend naturally from $R$-algebras to dg categories\,; see \S\,\ref{se:add}. In order to study all these invariants simultaneously the notion of {\em additive invariant} was introduced in~\cite[\S\,15]{Additive}. This theory makes use of the language of Grothendieck derivators, a formalism which
allows us to state and prove precise universal properties\,; see Appendix~\ref{appendix}.
Let $\mathit{E}: \HO(\dgcat) \to \bbD$ be a morphism of derivators, from the
derivator associated to $\dgcat$, to
a strong triangulated derivator~$\bbD$.
We say that $\mathit{E}$ is an {\em additive invariant} if it preserves filtered homotopy
colimits and the terminal object, and if it sends split exact sequences to direct sums
$$
\begin{array}{lcccr}
\xymatrix{
 \cA \ar[r]_{I} & \cB \ar@<-1ex>[l]_{T}
\ar[r]_P & \cC \ar@<-1ex>[l]_{S}
} & &\longmapsto & & [\mathit{E}(I)\,\, \mathit{E}(S)]:\ \mathit{E}(\cA)\oplus \mathit{E}(\cC) \stackrel{\sim}{\too} \mathit{E}(\cB)\,.
\end{array}
$$
By the additivity results of Keller~\cite{cyclic}, Waldhausen~\cite{Wald}, Schlichting~\cite{Marco}, Weibel~\cite{Weibel}, and Blumberg-Mandell~\cite{BlumbergMandell} (see also~\cite{MacLane}), all the above classical theories are additive invariants (see \S\,\ref{se:add}).
In~\cite[Def.~15.1]{Additive} the {\em universal additive invariant} was constructed
\begin{equation*}
 \Uadd: \HO(\dgcat) \too \Madd\,.
\end{equation*}
It is universal in the following sense. Given any strong triangulated derivator $\bbD$ we have an equivalence of categories
\begin{equation}\label{eq:cat}
(\Uadd)^{\ast}:\ \HomC\big(\Madd, \bbD\big) \stackrel{\sim}{\too} \HomA\big(\HO(\dgcat), \bbD\big)\,,
\end{equation}
where the left-hand side denotes the category of homotopy colimit preserving morphisms of derivators and the right-hand side denotes the category of additive invariants\,; see~\cite[Thm.~15.4]{Additive}. In words, this means that every additive invariant on dg categories (an object in the right-hand category) factors essentially uniquely via $\Uadd$, that is, via the universal additive derivator~$\Madd$. 

Furthermore, this universal additive derivator $\Madd$ admits an explicit Quillen model $\cMadd$; see \S\,\ref{se:Madd}.
Because of its universal property, which is reminiscent of the theory of motives, the derivator $\Madd$ is called the {\em additive motivator}, and its base category $\Madd(e)$, which is the homotopy category $\Ho(\cMadd)$, is called the {\em triangulated category of non-commutative motives}.
%-----------------------------------------------------------------------
\subsection{Fundamental isomorphism conjecture}
%-----------------------------------------------------------------------
The above notion of additivity combined with functor \eqref{eq:comp} yields the notion of an {\em additive functor on the orbit category}; see Definition~\ref{addfunctor}. In particular, all functors mentioned in diagram \eqref{eq:skein} are additive on~$\Or(G)$.
\begin{remark}[Limitations]
We would like to mention that invariants involving \emph{enriched} structures, like topological $K$-theory or $L$-theory are not additive invariants in this first, elementary sense. Including the Baum-Connes conjecture in our treatment would require the definition of topological $K$-theory of the reduced $C^*$-algebra via dg categories and this does not exist at the moment. Similarly, including $L$-theory would require to carry over dualities throughout the game and this is another story. 
\end{remark}
An application of universality~\eqref{eq:cat} is the following\,:
\begin{theorem}[see Thm.\,\ref{thm:factor}]\label{thm:Imain1}
Let $G$ be a group and let $R$ be a commutative ring. Then there exists a fundamental additive functor on its orbit category
\begin{equation*}
\Efund:\ \Or(G) \stackrel{\overline{?}}{\too} \mathsf{Grp} \stackrel{R[-]}{\too} R\text{-}\cat \subset \dgcat \stackrel{\Uadd}{\too} \cMadd
\end{equation*}
through which all additive functors on $\Or(G)$ factor.
\end{theorem}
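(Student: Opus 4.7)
The plan is a direct application of the universal property~\eqref{eq:cat} of the additive motivator $\Uadd$. First, I would construct $\Efund$ simply as the composition written in the statement: start with the transport groupoid functor $\overline{?}:\Or(G)\to\mathsf{Grp}$, apply $R$-linearization $R[-]:\mathsf{Grp}\to R\text{-}\cat$, use the inclusion $R\text{-}\cat\subset\dgcat$, and finally compose with the base-level functor of the universal additive invariant landing in the explicit Quillen model $\cMadd$. That $\Efund$ is itself an additive functor on~$\Or(G)$, in the sense of Definition~\ref{addfunctor}, is automatic, since $\Uadd$ is by construction an additive invariant of dg categories.

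For the factorization claim, let $E:\Or(G)\to\bbD(e)$ be any additive functor on the orbit category, where $\bbD$ is a strong triangulated derivator. By the very definition of an additive functor on $\Or(G)$, there exists an additive invariant $\widetilde{E}:\HO(\dgcat)\to\bbD$ of dg categories such that $E$ is the composition of $\widetilde{E}_e$ with the functor $\Or(G)\to\dgcat$ in~\eqref{eq:comp}. The universal property~\eqref{eq:cat}, applied to the derivator $\bbD$, produces a unique (up to isomorphism) homotopy colimit preserving morphism of derivators $\overline{E}:\Madd\to\bbD$ with $\overline{E}\circ\Uadd\simeq\widetilde{E}$. Evaluating at the base category and precomposing with $\Or(G)\to\dgcat$ then yields the sought-for factorization $E\simeq\overline{E}_e\circ\Efund$.

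The main point requiring care is conceptual rather than technical: one must set up Definition~\ref{addfunctor} so that ``additive functor on $\Or(G)$'' is equivalent to ``additive invariant of dg categories precomposed with~\eqref{eq:comp}''. Granted this, the theorem is essentially a formal consequence of~\eqref{eq:cat}, viewed as factoring additive invariants through a fixed piece on the left. No new computation with dg categories or with the model $\cMadd$ is required, and the strength of the statement is entirely inherited from the universal property of~$\Uadd$ established in~\cite[Thm.~15.4]{Additive}.
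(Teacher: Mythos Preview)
Your proposal is correct and follows essentially the same approach as the paper: unpack Definition~\ref{addfunctor} to write the given additive functor as an additive invariant of dg categories precomposed with~\eqref{eq:composed}, then invoke the universal property~\eqref{eq:cat} (i.e.\ \cite[Thm.~15.4]{Additive}) to factor that invariant through $\Uadd$, yielding the desired factorization through~$\Efund$. The only minor discrepancy is that the paper's Definition~\ref{addfunctor} takes the target to be a stable model category~$\cM$ (so the relevant derivator is $\HO(\cM)$) rather than an arbitrary strong triangulated derivator~$\bbD$, but this does not affect the argument.
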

Intuitively Theorem~\ref{thm:Imain1} allows us to comb
the skein~\eqref{eq:skein} from the left to isolate a fundamental
additive functor
\begin{equation*}
\vcenter{
\xymatrix@C=8em@R=0.4em{
&& \spe
\\
&& \spe \kern-1em
\\
\Or(G) \ar[r]^-{\displaystyle\Efund}
& \cMadd
 \ar[ruu]^(.7){\overline{\bbK}}
 \ar[ru]^(.75){\overline{KH}} \ar[r]^(.8){\overline{HH}}
 \ar[rd]^(.75){\overline{HC}} \ar[rdd]^(.75){\overline{THH}} \ar[rddd]^(.75){\overline{\bfE}}_-{\vdots}
& \Ch(R) \kern-2em
\\
&& \Ch(R) \kern-1em
\\
&& \spe \\
&& \cM
}}\end{equation*}
A key point is that the right-hand
functors $\overline{E}$ preserve homotopy colimits (not only filtered ones). Hence they will preserve \emph{any} assembly property that
$\Efund$ might enjoy. We then obtain the following answer to our Question~A\,:
\begin{corollary}[see Cor.\,\ref{cor:ass}]\label{cor:Imain}
Let $G$ be a group and $\cF$ a family of subgroups. If the
fundamental additive functor $\Efund$ has the $\cF$-assembly
property, so do all additive functors on~$\Or(G)$.
\end{corollary}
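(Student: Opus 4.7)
The plan is to reduce the $\cF$-assembly property for an arbitrary additive functor on $\Or(G)$ to that for the fundamental functor $\Efund$, by invoking the factorization supplied by Theorem~\ref{thm:Imain1}. The crucial observation is that, although a general additive invariant is only required to preserve \emph{filtered} homotopy colimits, the second factor $\overline{E}$ arising from the universal property~\eqref{eq:cat} preserves \emph{all} homotopy colimits (it lies in $\HomC$). This strengthening from ``filtered'' to ``all'' is exactly what is needed, since the indexing category $\Or(G,\cF)$ is typically not filtered.

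Concretely, let $\bfE\colon \Or(G)\to \bbD$ be an additive functor on $\Or(G)$, landing in some strong triangulated derivator $\bbD$. By Theorem~\ref{thm:Imain1} it factors as $\bfE \simeq \overline{E}\circ \Efund$, where $\overline{E}\colon \cMadd\to \bbD$ is a hocolim-preserving morphism of derivators obtained from~\eqref{eq:cat}. Applying $\overline{E}$ to the $(\Efund,\cF,G)$-assembly map and commuting $\overline{E}$ past the homotopy colimits on each side identifies the $(\bfE,\cF,G)$-assembly map with $\overline{E}$ applied to the $(\Efund,\cF,G)$-assembly map:
\[
\bigl(\hocolim_{\Or(G,\cF)}\bfE \too \bfE(G)\bigr) \;\simeq\; \overline{E}\bigl(\hocolim_{\Or(G,\cF)}\Efund \too \Efund(G)\bigr)\,.
\]
Since every morphism of derivators sends weak equivalences to weak equivalences (being given pointwise by functors of homotopy categories), an $\cF$-assembly equivalence for $\Efund$ automatically yields one for $\bfE$.

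The main technical content sits in Theorem~\ref{thm:Imain1} rather than in this corollary: one needs every additive functor on $\Or(G)$, in the sense of Definition~\ref{addfunctor}, to genuinely fit into the universal property~\eqref{eq:cat}, so that the induced $\overline{E}$ is honestly hocolim-preserving into the right target derivator. Once that factorization is in place, the present corollary is a purely formal consequence of the naturality of the assembly map~\eqref{eq:ass} under hocolim-preserving morphisms, and no further assembly-theoretic input is required.
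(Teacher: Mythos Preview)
Your argument is correct and matches the paper's own proof essentially verbatim: apply the hocolim-preserving morphism $\overline{E}$ (provided by Theorem~\ref{thm:factor}) to the $(\Efund,\cF,G)$-assembly map and conclude. Your additional commentary about why ``all'' rather than ``filtered'' hocolims matters is accurate and helpful, though the paper leaves this implicit.
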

\goodbreak
Let us make clear that the $\cF$-assembly property for $\Efund$ has essentially no chance to hold for random choices of~$G$, $\cF$ and~$R$. For instance, if $\cF=\cV C$ this property would imply the $(K, \cV C, G)$-isomorphism conjecture for $R=\bbZ$ and for $K$ being {\em connective} $K$-theory (see \S\,\ref{sub:conn}). And this is known to fail because of the Bass-Heller-Swan decomposition; see~\cite[Rem.~15]{Handbook}. However, if $R$ is a \emph{regular} ring (\ie noetherian and of finite projective dimension) in which the orders of all finite subgroups of $G$ are invertible, then the above obstruction vanishes because the $(K, \cV C, G)$-isomorphism conjecture follows from the Farrell-Jones conjecture\,; see Proposition~\ref{prop:reduction}. This might suggest the following ``mother'' of many  isomorphism conjectures\,:
\begin{conjecture*}
Given a group $G$, the fundamental additive invariant $\Efund$ has the
$\cV C$-assembly property when the base ring $R$ is regular and the orders of all finite subgroups of $G$ are invertible in~$R$. A large class of examples is given by taking $R=\bbQ$ or $\bbC$ and $G$ arbitrary. Another large class of examples is given by taking $R=\bbZ$ and $G$ torsion-free.
\end{conjecture*}

Corollary~\ref{cor:Imain} says that the Mamma conjecture implies {\em all} additive conjectures on the market, for that base ring~$R$ and that group~$G$, with respect to virtually cyclic subgroups. Note that our choice of the family of virtually cyclic groups is merely borrowed from Farrell-Jones and another family $\cF$ might be preferable. In any case, the main result is that once this is achieved for some family $\cF$, then all additive functors will automatically inherit the same $\cF$-assembly property.

\medbreak

Stated differently, instead of multiplying the articles on variations on the isomorphisms conjectures for this and that additive invariant, mathematicians can now try to attack \emph{one single} conjecture and deduce all other ones. It is of course important to know how wild this conjecture is, as asked in our Question~B. We discuss this problem now.

\medbreak

We would like to reduce the $\cF$-assembly
property for $\Efund$, whose importance should now be clear, to the
$\cF$-assembly property for more down-to-earth functors. To do this, we consider functors which are cooked up via
$K$-theory and dg categories as
follows. Given a small dg category $\cB$, consider the functor
$\KB{-}:\Or(G)\to\spe$ defined for every $G/H\in\Or(G)$ by
$$
\KB{G/H}:=K\big(\rep\big(\cB,R[\overline{G/H}]\big)\big)\,.
$$
Some explanations are in order. For any small dg category $\cA$, we denote by $\rep(\cB,\cA)$ the internal Hom-functor, between $\cB$ and $\cA$, in the derived Morita homotopy category; see \S\,\ref{sub:internal}. If $\cB$ is the dg category $\underline{R}$ with one object and with $R$ as dg algebra of endomorphisms, then the functor
$\KB{-}$ reduces to the usual connective $K$-theory
functor~$K$. Hence, when $\cB$ is a general
small dg category, the functor $\KB{-}$ can be thought of as a
``coefficients variant'' of $K$; see Example~\ref{ex:coef}.
The functor $\KB{-}$ is not additive in general,
mainly because $\cB$ might be too large. Therefore, we restrict to dg categories $\cB$ which are \emph{homotopically
finitely presented}; see Definition~\ref{def:h-fin}. Heuristically, this
condition is the homotopical version of the classical notion of finite
presentation. In particular the above example $\cB=\underline{R}$ is homotopically
finitely presented. Our answer to Question~B is\,:
\begin{theorem}\label{thm:Imain}
Let $G$ be a group and $\cF$ be a family of subgroups. Then the
following conditions are equivalent\,:
\begin{enumerate}
\item The fundamental additive functor $\Efund$ has the $\cF$-assembly property for~$G$.
\smallbreak
\item The additive functors $\KB{-}$ have the
$\cF$-assembly property for~$G$, for all homotopically finitely
presented dg categories~$\cB$.
\smallbreak
\item The additive functors $\KB{-}$ have the $\cF$-assembly property for~$G$ for
all \dgcells~$\cB$ (see Definition~\ref{I-cell}).
\end{enumerate}
\end{theorem}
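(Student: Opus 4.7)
The plan is to establish the cycle of implications $(1)\Rightarrow(2)\Rightarrow(3)\Rightarrow(1)$. The first two implications are formal, and essentially all the substance lies in $(3)\Rightarrow(1)$.

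For $(1)\Rightarrow(2)$, I would first verify that $\KB{-}:\Or(G)\to\spe$ is an additive functor on $\Or(G)$ whenever $\cB$ is homotopically finitely presented. Indeed, the internal Hom $\rep(\cB,-)$ preserves filtered homotopy colimits (by h.f.p.\ of $\cB$) and sends split exact sequences of dg categories to split exact sequences, so post-composition with Waldhausen $K$-theory (and its additivity theorem) produces an additive functor in the sense of Definition~\ref{addfunctor}. Theorem~\ref{thm:Imain1} then factors $\KB{-}$ as $\overline{\KB{-}}\circ\Efund$ with $\overline{\KB{-}}$ preserving homotopy colimits, so Corollary~\ref{cor:Imain} transports the $\cF$-assembly property from $\Efund$ to $\KB{-}$. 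The implication $(2)\Rightarrow(3)$ is immediate since every \dgcell is, by standard arguments on the Morita model on $\dgcat$, homotopically finitely presented.

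For $(3)\Rightarrow(1)$, the strategy rests on two inputs about the additive motivator from~\cite{Additive}: (a) the family $\{\Uadd(\cB)\mid \cB\text{ a \dgcell}\}$ is a set of compact generators of the triangulated category $\Madd(e)$, so a morphism $\alpha$ in $\cMadd$ is a weak equivalence if and only if $\Map_{\cMadd}(\Uadd(\cB),\alpha)$ is a stable equivalence of spectra for every such $\cB$; and (b) the spectrum-valued mapping object in $\cMadd$ is naturally identified with the $K$-theory of rep categories,
\[
\Map_{\cMadd}\bigl(\Uadd(\cB),\Uadd(\cA)\bigr) \simeq K\bigl(\rep(\cB,\cA)\bigr).
\]
Given these, apply $\Map_{\cMadd}(\Uadd(\cB),-)$ to the $\Efund$-assembly map; compactness of $\Uadd(\cB)$ lets the functor commute past the homotopy colimit on the source, and identification~(b) turns the resulting map of spectra into precisely the $\KB{-}$-assembly map. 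Hypothesis~(3) then supplies a stable equivalence for every \dgcell $\cB$, and (a) concludes that the $\Efund$-assembly map was itself a weak equivalence in $\cMadd$.

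The main obstacle is input~(b): identifying the mapping spectrum in the additive motivator with the $K$-theory of $\rep(\cB,\cA)$. The natural route is to feed the functor $\cA\mapsto K(\rep(\cB,\cA))$ into the universal property~\eqref{eq:cat} and then read off the mapping spectrum using the explicit Quillen model $\cMadd$ of~\S\,\ref{se:Madd}. Some care is required to reconcile the connective $K$-theory appearing on the right with the stable structure on $\cMadd$ on the left, and to verify that \dgcells (rather than all h.f.p.\ dg categories) already suffice as compact generators — a verification that is exactly what makes the cheaper condition~(3) strong enough to recover~(1).
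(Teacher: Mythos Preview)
Your proposal is correct and follows essentially the same route as the paper: the easy implications $(1)\Rightarrow(2)\Rightarrow(3)$ via Corollary~\ref{cor:ass} and the h.f.p.\ status of \dgcells, and then $(3)\Rightarrow(1)$ by testing the $\Efund$-assembly map against the homotopic generators $h(\cB)$ for $\cB\in\dgcatsf$ (Prop.~\ref{GenDG}), using that $\uMap(h(\cB),-)$ preserves homotopy colimits (Prop.~\ref{homotcol}), and invoking the co-representability theorem of~\cite[Thm.~15.10]{Additive}. The one detail to watch is that the co-representability identification carries a suspension shift, $\uMap(h(\cB),\Uadd(\cA)[1])\simeq K(\rep(\cB,\cA))$, which is why the paper first replaces the assembly map by its suspension before applying $\uMap(h(\cB),-)$; this is exactly the ``reconciliation'' you allude to in your last paragraph.
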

The proof occupies~\S\,\ref{se:proofmain} and is based on the co-representability theorem~\ref{thm:co-representability}. The \dgcells\ of~(3) form a set of homotopically finitely presented
dg categories which are especially small. Roughly speaking, they are
the dg category analogues of finite CW-complexes, namely
they are built by attaching finitely many basic cells, chosen among
the dg analogues $\dgS(n-1)\to\dgD(n)$ of the
topological inclusion $S^{n-1}\hookrightarrow D^n$; see Definition~\ref{I-cell}.

Via Theorem~\ref{thm:Imain}, the Mamma conjecture now boils down to $K$-theory\,:

\begin{conjecture1*}\label{conj:fund1}
Given a group $G$, the functors $\KB{-}$ have the $\cV C$-assembly property for
all \dgcells~$\cB$, when the base ring $R$ is regular and the orders of all finite subgroups of $G$ are invertible in~$R$.
\end{conjecture1*}
When $R$ is a regular ring where the orders of all finite subgroups of $G$ are invertible and $\cB$ is the dg category $\cB=\underline{R}$, the above conjecture basically is the Farrell-Jones conjecture; see Remark~\ref{rk:converse}. Hence, the Mamma conjecture amounts to a coefficients variant of the classical Farrel-Jones conjecture, with strictly finite dg coefficients~$\cB$. Its importance (and that of Theorem~\ref{thm:Imain}) relies on the fact that it simultaneously implies all additive isomorphism conjectures on the market and yet is described solely in terms of $K$-theory. One can therefore expect that future research will adapt existing proofs of the Farrell-Jones conjecture for specific classes of groups to prove the Mamma conjecture, with the benefits explained above.

\medbreak

At some stage, and at least before \S\,\ref{sub:coef}, the reader who is not familiar with the language of Grothendieck derivators should proceed to Appendix~\ref{appendix}, where we also prove that the operations of stabilization and of left Bousfield localization of derivators commute (Theorem~\ref{commut}). The latter result is of independent interest.

%-----------------------------------------------------------------------
\section{The Davis and L\"uck approach}\label{se:DL}
%-----------------------------------------------------------------------

In this section, we recall Davis and L{\"u}ck's reformulation~\cite{DL} of the Farrell-Jones conjecture in $K$-theory. This will be the stepping stone for the construction of the fundamental additive functor in~\S\,\ref{se:univ}. Let $G$ be a (fixed) group.

%--------------------------------
\subsection{The orbit category}\label{sub:orbit}
%--------------------------------
The {\em orbit category $\Or(G)$ of $G$} has as objects the homogeneous
$G$-spaces $G/H$, considered as left $G$-sets, and as morphisms the $G$-equivariant maps. A {\em family $\cF$ of subgroups of $G$} is a non-empty set of
subgroups of $G$ which is closed under conjugation and finite intersection. Examples of families of subgroups are given by the family $\cF in$ of finite subgroups, by the family of cyclic subgroups (finite and infinite), and by the family $\cV C$ of
virtually cyclic subgroups\,; recall that $H$ is virtually
cyclic if it contains a cyclic subgroup of finite index.

The {\em orbit category} $\Or(G,\cF)$ {\em restricted on $\cF$} is the full subcategory of $\Or(G)$ consisting of those objects $G/H$ for which $H$ belongs to $\cF$.
%--------------------------------
\subsection{$\cF$-assembly property}
\label{assemblage}
%--------------------------------
The $\cF$-assembly property can be generalized from spectra (\S\,\ref{sub:isomconj}) to any target model category~$\cM$. Let $\cF$ be a family of
subgroups of $G$ and let $\bfE: \Or(G) \to \cM$ be a functor. The {\em $(\bfE,\cF,G)$-assembly map} is the map
\begin{equation}\label{eq:Gassemb}
\underset{\Or(G,\cF)}\hocolim\,\, \bfE \too \underset{\Or(G)}\hocolim\,\, \bfE = \bfE(G)
\end{equation}
in $\cM$. We say that $\bfE$ {\em has the $\cF$-assembly property (for~$G$)} when that map is an isomorphism in $\Ho(\cM)$.

A typical approach in the Davis and L\"uck philosophy (mostly with $\cM=\spe$) is the following\,: Given $G$
and $\bfE$, find as small a family $\cF$ as possible for which
$\bfE$ has the $\cF$-assembly property. For instance, for the Farrell-Jones
isomorphism conjectures in $K$- and $L$-theory, one expects $\cF$ to reduce to virtually
cyclic subgroups.

Conceptually, the $\cF$-assembly property for a functor
$\bfE:\Or(G)\to \cM$ essentially means that it is induced from its
restriction to~$\Or(G,\cF)$, up to homotopy, \ie it belongs to the
image of the functor on homotopy categories
$$
\LInd:\Ho\big(\Fun(\Or(G,\cF),\cM)\big)\too
\Ho\big(\Fun(\Or(G),\cM)\big)
$$
left adjoint to the obvious functor in the other direction, defined
by restriction from $\Or(G)$ to $\Or(G,\cF)$. This is explained
in~\cite{BM}, where we say that the functor \emph{$\bfE$ satisfies
$\Or(G,\cF)$-codescent} if $\bfE$ belongs to the image of $\LInd$ up
to isomorphism in $\Ho\big(\Fun(\Or(G),\cM)\big)$. This is
equivalent to the $\cF$-assembly property for~$G$ and for all its
subgroups. However, we shall not use the language of~\cite{BM} here.

%--------------------------------
\subsection{Transport groupoid}\label{sub:transport}
%--------------------------------
Let $S$ be a left $G$-set. The {\em transport groupoid} $\overline{S}$ associated to $S$ has $S$ as the set of objects and the following morphisms
$$ \Hom_{\overline{S}}(s,t):=\{g \in G\,|\, gs=t\}$$
for $s, t \in S$.
Composition is given by group multiplication. This defines a
functor
$$\overline{?}: \Or(G) \too \mathsf{Grp}$$
from the orbit category to the category of groupoids. Note that for every subgroup $H$ of $G$, the groupoid $\overline{G/H}$ is
connected. Hence it is equivalent to the full subcategory on any of
its objects, for instance the canonical object~$eH\in G/H$, whose group of automorphisms is~$H$. So, if we think of
the group $H$ as a one-object category, denoted $\underline{H}$, we have
an equivalence of groupoids $\underline{H}\isoto \overline{G/H}$. In
other words, the groupoid $\overline{G/H}$ is a natural
several-object replacement of the group~$H$.

%--------------------------------
\subsection{$R$-linearization}\label{sub:linearization}
%--------------------------------
We now recall the passage from groupoids to $R$-categories, \ie
additive categories enriched over the symmetric monoidal category of $R$-modules. Let $\cC$ be a groupoid. The {\em associated $R$-category $R[\cC]$} is the idempotent completion of the $R$-category $R[\cC]_\oplus$ whose objects are the formal finite direct sums of objects of~$\cC$ and
whose morphisms are the obvious matrices with entries in the free
$R$-modules $R[\cC(X,Y)]$ generated by the sets $\cC(X,Y)$.
Composition in $R[\cC]_\oplus$ is induced from composition in~$\cC$ and matrix multiplication. Idempotent completion is the usual formal creation of images and kernels for idempotent endomorphisms. The construction $\cC \mapsto R[\cC]$ yields a well-defined functor
$$ R[-]: \mathsf{Grp} \too R\text{-}\cat$$
with values in the category of (idempotent complete) small $R$-categories. For instance, for a one-object groupoid $\underline{H}$, the category $R[\underline{H}]_\oplus$ is equivalent to that of free $RH$-modules of finite rank and its idempotent completion $R[\underline{H}]$ is equivalent to the category of finitely generated projective $RH$-modules.

%--------------------------------
\subsection{$K$-theory}
%--------------------------------
Recall from~\cite{Pedersen-Weibel} that we can associate to every $R$-category $\cC$ its non-connective $K$-theory spectrum $\bbK(\cC)$, defining a functor $\bbK: R\text{-}\cat \to \spe$. Putting all these constructions together, we obtain the following composed functor
\begin{equation}\label{eq:K-functor}
\Or(G) \stackrel{\overline{?}}{\too} \mathsf{Grp} \stackrel{R[-]}{\too}  R\text{-}\cat \stackrel{\bbK}{\too} \spe\,.
\end{equation}
As usual, one obtains the $K$-theory \emph{groups}
$\K_\ast$ by taking (stable) homotopy groups. Thanks to the arguments in \S\,\ref{sub:transport}-\ref{sub:linearization} we have the following identifications
$$\K_{\ast}(RH)
= \pi_{\ast} \,\bbK(RH) \cong \pi_{\ast}
\,\bbK(R[\underline{H}]) \cong \pi_{\ast}
\,\bbK(R[\overline{G/H}]),
$$
which explain why the $K$-theory functor \eqref{eq:K-functor} defined on $\Or(G)$ is indeed the
expected one. This allowed Davis and L\"uck to prove in~\cite{DL} the equivalence between the Farrell-Jones conjecture in $K$-theory for $G$ and the $(\bbK, \cV C, G)$-isomorphism conjecture, \ie the statement that the functor \eqref{eq:K-functor} has the $\cV C$-assembly property.

Of course, there is also a classical connective $K$-theory functor, here simply denoted by $K:R\text{-}\cat \to \spe$. We now discuss a connection between $\bbK$ and~$K$.

\begin{proposition}[{L\"uck-Reich~\cite[Prop.~70]{Handbook}}]\label{prop:reduction}
Let $R$ be a regular ring in which the orders of all finite subgroups of $G$ are invertible. The $(\bbK,\cV C,G)$-isomorphism conjecture (\ie the Farrell-Jones conjecture) implies the $(K, \cF in, G)$-isomorphism conjecture, and a fortiori the $(K, \cV C, G)$-isomorphism conjecture.
\end{proposition}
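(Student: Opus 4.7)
The plan is to combine two independent reductions: one of the \emph{theory} from non-connective $\bbK$ to connective $K$, and one of the \emph{family} from $\cV C$ down to $\cF in$. The hypotheses on $R$ and on $G$ are precisely what make both reductions run.

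For the theory reduction, I would first check that under our assumptions $R[H]$ is regular for every finite $H\subseteq G$: the regularity of $R$ together with $|H|\in R^{\times}$ produces, by a Maschke-style argument, finite global dimension and Noetherianness of $R[H]$. The vanishing of negative $K$-theory for regular rings (Bass-Quillen) then forces the canonical map $K(R[H])\to\bbK(R[H])$ to be a weak equivalence of spectra. Functoriality upgrades this to a pointwise weak equivalence $K\isotoo \bbK$ of functors on $\Or(G,\cF in)$, and hence to an equivalence $\hocolim_{\cF in} K \isotoo \hocolim_{\cF in}\bbK$.

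For the family reduction, I would invoke the standard transitivity principle for assembly: to identify $\hocolim_{\cF in}\bbK$ with $\hocolim_{\cV C}\bbK$ it suffices to verify, for every virtually cyclic subgroup $V\subseteq G$, the $(\bbK,\cF in|_V,V)$-isomorphism conjecture. Any infinite virtually cyclic group is either of first type $F\rtimes\bbZ$ or of second type $F_1*_F F_2$ with $F,F_1,F_2$ finite. Bass-Heller-Swan in the semidirect-product case, and Waldhausen's splitting for amalgamated products, express $\bbK(R[V])$ in terms of the values at these finite subgroups together with Nil-term contributions. Under our hypotheses the $R[F_i]$ are regular, so the Nil-terms vanish and the decompositions collapse to precisely the required assembly equivalence. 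Transitivity then yields $\hocolim_{\cF in}\bbK\isotoo\hocolim_{\cV C}\bbK$.

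Combining the two reductions with the assumed $(\bbK,\cV C,G)$-isomorphism conjecture produces a chain
\[
\hocolim_{\cF in} K \;\isotoo\; \hocolim_{\cF in}\bbK \;\isotoo\; \hocolim_{\cV C}\bbK \;\isotoo\; \bbK(G).
\]
Since the leftmost spectrum is connective, so is $\bbK(G)$, whence $\bbK(G)\simeq K(G)$ and, by naturality, the composite equivalence realises the $(K,\cF in,G)$-assembly map. For the \emph{a fortiori} clause, the same Bass-Heller-Swan / Nil-vanishing argument applied directly to $K$ (valid because $K\simeq\bbK$ on the relevant finite-group rings and the resulting spectra are connective) gives $\hocolim_{\cF in} K\isotoo\hocolim_{\cV C} K$; two-out-of-three along the factorisation of the $(K,\cF in,G)$-assembly through $\hocolim_{\cV C} K$ then delivers the $(K,\cV C,G)$-isomorphism. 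The main obstacle will not be any single ingredient—each is standard in isolation—but the careful bookkeeping required to make the transitivity principle and the Waldhausen splittings genuinely \emph{natural} in the orbit category, so that one compares assembly \emph{maps} and not merely their sources and targets.
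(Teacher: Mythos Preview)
Your approach is essentially the paper's: both set up the same commutative square comparing the connective and non-connective assembly maps, establish $\hocolim_{\cF in} K \simeq \hocolim_{\cF in}\bbK$ from regularity of $R[H]$ for finite $H$, and use $\hocolim_{\cF in}\bbK \simeq \hocolim_{\cV C}\bbK$ (which the paper simply cites from L\"uck--Reich, while you sketch the underlying transitivity and Nil-vanishing argument). The only variation is the final step: you argue that the chain forces $\bbK(RG)$ to be connective, whence $K(RG)\to\bbK(RG)$ is an equivalence; the paper instead uses only that this comparison map is always a \emph{monomorphism} on stable homotopy groups, which together with the commutative diagram suffices to conclude directly that the connective assembly map $\alpha$ is an equivalence.
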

\begin{proof}
We have a commutative diagram of natural maps
$$
\xymatrix@C=4em@R=1.5em{
\underset{\Or(G,\cF in)}\hocolim\, K {\vphantom{{{I^I}^I}^I}} \ \ar[d]_-\gamma^-{\simeq} \ar[rr]^-\alpha
&& \ K(RG) \ar[d]^-\beta
\\
\ \underset{\Or(G,\cF in)}\hocolim\, \bbK {\vphantom{{{I^I}^I}^I}} \ \ar[r]^-\delta_-{\simeq}
& \ \underset{\Or(G,\cV C)}\hocolim\, \bbK {\vphantom{{{I^I}^I}^I}} \ \ar[r]^-\epsilon
& \ \bbK(RG){\vphantom{{{I}^I_j}^I_j}}\,.
}$$
Under the stated assumptions on $R$, \cite[Prop.~70]{Handbook} implies that $\delta$ is a stable weak equivalence.
Moreover, as shown in the proof of~\cite[Prop.~70]{Handbook}, the group rings $RH$, with $H<G$ finite, are regular rings. This implies that $\gamma$ is a stable weak equivalence. Since $\beta$ induces a monomorphism on stable homotopy groups, if $\epsilon$ is a stable weak equivalence then so is $\alpha$.
\end{proof}

\begin{remark}\label{rk:converse}
Conversely, under the above assumptions about $R$ and $G$, one expects the spectrum $\bbK(RG)$ to be connective; see \cite[\S\,2.4.1]{Handbook}. If this is the case, the above proof also gives the converse to the statement of Proposition~\ref{prop:reduction}.
\end{remark}

%--------------------------------
\section{Dg categories}\label{se:dgcat}
%--------------------------------
We review some aspects of the theory of dg categories and introduce the notion of strictly finite dg cell. For a survey article, we invite the reader to consult Keller~\cite{ICM}.

Let $\cA$ be a small dg category (\S\,\ref{sub:NCM}). The {\em
opposite dg category} $\mathcal{A}\op$ of $\cA$ has the same objects
as $\mathcal{A}$ and complexes of morphisms given by
$\mathcal{A}\op(x,y):=\mathcal{A}(y,x)$. The category $\dgZo(\cA)$ has the same objects as $\cA$ and morphisms given by $\dgZo(\cA)(x,y):=\textrm{Z}^0(\cA(x,y))$, the 0-cocycles in the cochain complex $\cA(x,y)$. The {\em homotopy
category} $\dgHo(\cA)$ of $\cA$ has the same objects as $\cA$ and
morphisms given by $\dgHo(\cA)(x,y):=
\textrm{H}^0(\cA(x,y))$. Recall from~\cite[\S\,3.1]{ICM} that a {\em
right dg $\cA$-module} (or simply an $\cA$-module) is a dg functor $\cA\op \rightarrow
\cC_{\dg}(R)$, with values in the dg category $\cC_{\dg}(R)$ of
complexes of $R$-modules. We denote by $\cC(\cA)$ (resp.\ by
$\cC_{\dg}(\cA)$) the category (resp.\ dg category) of $\cA$-modules. Recall from~\cite[Thm.~3.2]{ICM} that $\cC(\cA)$ carries a standard projective model structure. The {\em derived category $\cD(\cA)$ of $\cA$} is the localization of $\cC(\cA)$ with respect to quasi-isomorphisms. Finally, let $\perf(\cA)$ be the dg category of {\em perfect} $\cA$-modules, \ie the
full dg subcategory of $\cC_{\dg}(\cA)$ spanned by the cofibrant $\cA$-modules that become compact~\cite[Def.~4.2.7]{Neeman} in the triangulated category $\cD(\cA)$.
%--------------------------------
\subsection{Strictly finite dg cells}\label{sse:dg}
%--------------------------------
Let $\underline{R}$ be the small dg category with one
object $\ast$ and such that $\underline{R}(\ast,\ast):=R$ (in degree
zero), where $R$ is the base ring. For $n \in \mathbb{Z}$, let $S^{n}$ be the complex $R[n]$
(with $R$ concentrated in degree $n$) and let $D^n$ be the mapping
cone on the identity of $S^{n-1}$. We denote by $\dgS(n)$ the dg
category with two objects $1$ and $2$ such that $ \dgS(n)(1,1)=R \ko
\dgS(n)(2,2)=R \ko \dgS(n)(2,1)=0  \ko \dgS(n)(1,2)=S^{n} $ and
composition given by multiplication. We denote by $\dgD(n)$ the dg
category with two objects $3$ and $4$ such that $ \dgD(n)(3,3)=R \ko
\dgD(n)(4,4)=R \ko \dgD(n)(4,3)=0 \ko \dgD(n)(3,4)=D^n $ and with
composition given by multiplication. Finally, let
$\iota(n):\dgS(n-1)\to \dgD(n)$ be the dg functor that sends $1$ to
$3$, $2$ to $4$ and $S^{n-1}$ into $D^n$ via the map $\incl:S^{n-1}\to D^{n}$ which is the identity on $R$ in
degree~$n-1$\,:
$$
\vcenter{
\xymatrix@C=.5em@R=1em{
\dgS(n-1) \ar@{=}[d]
&\ar[rrr]^{\displaystyle \iota(n)}&&&& \dgD(n) \ar@{=}[d]
\\
&&&&&\\
\\
1 \ar@(ul,ur)[]^{R} \ar[dd]^-{S^{n-1}}
& \kern1em \ar@{|->}[rrr]
&&&\kern1em
& 3\ar@(ul,ur)[]^{R}  \ar[dd]_-{D^n}
\\
& \kern1em \ar[rrr]^-{\incl} &&&\kern1em
\\
2 \ar@(dr,dl)[]^{R}
& \kern1em \ar@{|->}[rrr]
&&&\kern1em & 4\ar@(dr,dl)[]^{R}
}}
\qquad\text{where}\qquad
\vcenter{\xymatrix@R=1em@C=.8em{ S^{n-1} \ar[rr]^-{\incl} \ar@{=}[d]
&& D^n \ar@{=}[d]
\\
\ar@{.}[d]
&& \ar@{.}[d]
\\
0 \ar[rr] \ar[d]
&& 0 \ar[d]
\\
0 \ar[rr] \ar[d]
&& R \ar[d]^{\id}
\\
R \ar[rr]^{\id} \ar[d]
&& R \ar[d]
&{\scriptstyle(\textrm{degree }n-1)}
\\
0 \ar[rr] \ar@{.}[d]
&& 0 \ar@{.}[d]
\\
&&}}
$$
We denote by $I$ the set consisting of the dg functors
$\{\iota(n)\}_{n\in \mathbb{Z}}$ and the dg functor $\emptyset
\rightarrow \underline{R}$ (where the empty dg category $\emptyset$
is the initial one).

\begin{definition}\label{I-cell}
A small dg category $\cA$ is a {\em \dgcell} (compare with {Hirschhorn}~\cite[Def.~10.5.8]{Hirschhorn}) if
it is obtained from $\emptyset$ by a finite number of pushouts along
the dg functors of the set $I$. We denote by $\dgcatsf$ the full subcategory of $\dgcat$ consisting of \dgcells.
\end{definition}

%--------------------------------
\subsection{Quillen model structure}\label{sub:Morita}
%--------------------------------
Recall from~\cite[Thm.~5.3]{IMRN} that the category $\dgcat$ is
endowed with a (cofibrantly generated) {\em derived Morita} model structure,
whose weak equivalences are the {\em derived Morita dg functors}, \ie the dg
functors $F:\cA \rightarrow \cB$ which induce an equivalence on the
derived categories $\cD(\cB) \stackrel{\sim}{\rightarrow} \cD(\cA)$.
We denote by $\Ho(\dgcat)$ the homotopy category
hence obtained.

%--------------------------------
\subsection{Internal Hom-functor}\label{sub:internal}
%--------------------------------
Given dg categories $\cB$ and $\cA$ their {\em tensor product}
$\cB \otimes \cA$ is defined as follows. The set of objects is the cartesian product and, given objects $(z,x)$ and $(w,y)$ in $\cB \otimes \cA$, we set $(\cB \otimes \cA)((z,x),(w,y)):= \cB(z,w) \otimes \cA(x,y)$. This tensor product can be naturally derived into a bifunctor
\begin{equation}\label{eq:bifunctor}
-\otimes^{\Left}-: \Ho(\dgcat) \times \Ho(\dgcat) \too \Ho(\dgcat)\,,
\end{equation}
which gives rise to a symmetric monoidal structure on $\Ho(\dgcat)$. By To\"en~\cite[Thm.~6.1]{Toen} the bifunctor \eqref{eq:bifunctor} admits an internal Hom-functor $\rep(-,-)$.\footnote{Denoted by $\bbR\uHom(-,-)$ in {\em loc. cit.}} Given small dg categories $\cB$ and $\cA$, $\rep(\cB,\cA)$ is the full dg subcategory of $\cC_\dg(\cB\op\otimes^{\bbL}\cA)$ spanned by the cofibrant $\cB\text{-}\cA$-bimodules $X$ such that, for every object $z$ in $\cB$, the $\cA$-module $X(z,-)$ belongs to $\perf(\cA)$; by a $\cB\text{-}\cA$-bimodule we mean a dg functor $\cB\op\otimes \cA \to \cC_\dg(R)$, \ie a $\cB\op\otimes \cA$-module. Equivalently, $\rep(\cB,\cA)$ is formed by the cofibrant $\cB\text{-}\cA$-bimodules $X$ such that the induced functor
$$ - \otimes^{\bbL}_{\cB} X : \cD(\cB) \too \cD(\cA)$$
takes the representable $\cB$-modules to perfect $\cA$-modules. Such a bimodule yields a functor $\dgHo(\cB) \to \dgHo(\perf(\cA))$, which suggests that $\rep(\cB,\cA)$ can be thought of as the dg category of representations ``up to homotopy'' of $\cB$ in perfect $\cA$-modules. Note that  $\rep(\underline{R},\cB)$ is derived Morita equivalent to $\perf(\cB)$ (see~\cite[\S 4]{ICM}).

%-----------------------------------------------------------------------
\section{Additive invariants of dg categories}\label{se:add}
%-----------------------------------------------------------------------
Recall from \S\,\ref{sub:NCM} the notion of additive invariant of dg categories\,; consult~\cite[\S\,15]{Additive} for further details. In this section we collect several examples of additive invariants and introduce a ``coefficients variant''.

%--------------------------------
\subsection{Connective $K$-theory}\label{sub:conn}
%--------------------------------
Given a small dg category $\cA$ the $R$-linear category $\dgZo(\perf(\cA))$ is a category with cofibrations and weak equivalences in the sense of Waldhausen~\cite{Wald}. The cofibrations are the morphisms of $\cA$-modules which admit retractions as morphisms of graded $\cA$-modules and the weak equivalences are the quasi-isomorphisms; see \cite[\S5.2]{ICM}. The {\em connective $K$-theory spectrum $K(\cA)$ of $\cA$} is obtained by applying Waldhausen's construction~\cite[\S\,1.3]{Wald} to $\dgZo(\perf(\cA))$. Thanks to \cite[Example~15.6]{Additive} this gives rise to an additive invariant of dg categories
$$K: \HO(\dgcat) \too \HO(\spe)\,.$$

%--------------------------------
\subsection{Non-connective $K$-theory}\label{sub:nonconn}
%--------------------------------
Given a small dg category $\cA$, its {\em non-connective $K$-theory spectrum $\bbK(\cA)$} is obtained by applying Schlichting's construction to the Frobenius pair naturally associated to $\dgZo(\perf(\cA))$\,; see~\cite[\S\,6.4]{Marco}. Thanks to~\cite[Thm.~10.9]{Additive} this gives rise to an additive invariant of dg categories
$$\bbK: \HO(\dgcat) \too \HO(\spe)\,.$$
%--------------------------------
\subsection{Homotopy $K$-theory}\label{sub:homot}
%--------------------------------
Recall from Weibel~\cite[\S\,1]{Weibel} the simplicial $R$-algebra $\Delta_{\bullet}$ (viewed as a simplicial object in $\dgcat$), where $\Delta_n:= R[t_0, \ldots, t_n]/\sum_{i=0}^n t_i -1$. Given a small dg category $\cA$, its {\em homotopy $K$-theory spectrum $KH(\cA)$} is given by
$\mathrm{hocolim}_n\, \bbK(\cA \otimes \Delta_n)$.
Then we obtain a well-defined morphism of derivators
$$KH: \HO(\dgcat) \too \HO(\spe)\,.$$
By construction it preserves filtered colimits and the terminal object. Since the functors $-\otimes \Delta_n$ send split exact sequences to split exact sequences, we conclude that $KH$ is also an example of an additive invariant.
%--------------------------------
\subsection{Hochschild and cyclic homology}\label{sse:HCH}
%--------------------------------
Let $\cA$ be a small dg category. Recall from \cite[\S~5.3]{ICM} the construction of the Hochschild and cyclic homology complexes $HH(\cA)$ and $HC(\cA)$, and of the mixed complex $C(\cA)$.
Thanks to~\cite[Thm.~10.7]{Additive}, this gives rise to additive invariants of dg categories
\begin{eqnarray*}
C: \HO(\dgcat) \too \HO(\Lambda\text{-}\Mod) && HH,HC: \HO(\dgcat) \too \HO(\Ch(R))\,,
\end{eqnarray*}
where $\Lambda:= R[B]/(B^2)$, with $B$ of degree $-1$ and $dB=0$, and $\Ch(R)$ denotes the category of complexes of $R$-modules endowed with its projective model structure.
%--------------------------------
\subsection{Topological Hochschild homology}\label{sub:THH}
%--------------------------------
Let $\cA$ be a small dg category. Recall from~\cite[\S\,3]{BlumbergMandell} or~\cite[\S\,8.1]{MacLane} the {\em topological Hochschild homology spectrum $THH(\cA)$}. Thanks to~\cite[Prop.~8.9]{MacLane} this gives rise to an additive invariant of dg categories
$$THH :\HO(\dgcat) \longrightarrow \HO(\spe)\,.$$
\begin{remark}
Any $R$-algebra $A$ can be seen as a small dg category $\underline{A}$ with one object and with $A$ as the dg algebra of endomorphisms concentrated in degree zero. Note that the above invariants \S\,\ref{sub:conn}-\ref{sub:THH} verify the ``agreement property'', \ie when we apply them to $\underline{A}$ we recover the classical invariants associated to~$A$.
\end{remark}
%--------------------------------------------------------------
\subsection{Coefficients variant}\label{sub:coef}
%--------------------------------------------------------------
Given a small dg category $\cB$, the functor $\rep(\cB,-)$ (see \S\,\ref{sub:internal}) naturally gives rise to a morphism of derivators
\begin{equation}\label{eq:morphD}
\rep(\cB,-): \HO(\dgcat) \too \HO(\dgcat)\,.
\end{equation}

\begin{lemma}\label{lem:coef}
If the dg category $\cB$ is homotopically finitely
presented (Def.\,\ref{def:h-fin}), then the morphism \eqref{eq:morphD} preserves filtered homotopy colimits, the terminal object, and split exact sequences.
\end{lemma}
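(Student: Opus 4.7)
The strategy is to verify the three conditions separately, leveraging throughout that $\rep(\cB,-)$ is, by construction (\S\,\ref{sub:internal}), the right adjoint of the tensor functor $-\otimes^{\Left}\cB\colon\Ho(\dgcat)\to\Ho(\dgcat)$.

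\textbf{Terminal object and split exact sequences.} Both of these preservations are formal consequences of $\rep(\cB,-)$ being a functorial right adjoint, and in particular do not invoke the hypothesis that $\cB$ is homotopically finitely presented. The terminal object is preserved because any right adjoint preserves all limits that exist, including the terminal one. For a split exact sequence
$$
\xymatrix{ \cA \ar[r]_{I} & \cA' \ar@<-1ex>[l]_{T} \ar[r]_P & \cA'' \ar@<-1ex>[l]_{S} }
$$
functoriality of $\rep(\cB,-)$ produces dg functors $I_\ast,P_\ast,T_\ast,S_\ast$ satisfying the same compositional identities (\eg\ $T_\ast\circ I_\ast=\id$, $P_\ast\circ S_\ast=\id$, $P_\ast\circ I_\ast=0$). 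The remaining exactness condition, which can be formulated as a specific finite homotopy (co)limit of the splitting data being equivalent to the middle term, transports under $\rep(\cB,-)$ because the functor preserves the relevant finite homotopy limits, again as a right adjoint.

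\textbf{Filtered homotopy colimits.} Let $\{\cA_i\}_{i\in\cI}$ be a filtered diagram in $\dgcat$. I wish to show that the canonical comparison map
\begin{equation*}
\alpha\colon\hocolim_i\rep(\cB,\cA_i)\too\rep\bigl(\cB,\hocolim_i\cA_i\bigr)
\end{equation*}
is a derived Morita equivalence. It suffices to test $\alpha$ after mapping out of an arbitrary homotopically finitely presented (h.f.p.) dg category $\cT$, since such objects form a family of compact generators of $\Ho(\dgcat)$. By the tensor-Hom adjunction,
\begin{equation*}
\Map\bigl(\cT,\rep(\cB,\hocolim_i\cA_i)\bigr)\simeq\Map\bigl(\cT\otimes^{\Left}\cB,\hocolim_i\cA_i\bigr).
\end{equation*}
Assuming $\cT\otimes^{\Left}\cB$ is itself h.f.p., the right-hand side equals $\hocolim_i\Map(\cT\otimes^{\Left}\cB,\cA_i)\simeq\hocolim_i\Map(\cT,\rep(\cB,\cA_i))$, which by h.f.p.\ of $\cT$ is in turn $\Map(\cT,\hocolim_i\rep(\cB,\cA_i))$, as required.

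\textbf{Main obstacle.} The above reduction hinges on the auxiliary claim that the derived tensor product of two homotopically finitely presented dg categories is again homotopically finitely presented; this is the step I expect to require the most care. The natural approach is an induction on the cell structure of $\cT$: the class $\{\cT\mid\cT\otimes^{\Left}\cB\text{ is h.f.p.}\}$ is visibly closed under retracts and finite homotopy pushouts (inherited from the analogous stability of h.f.p.\ dg categories), and it contains the generating cells. Indeed, $\underline{R}\otimes^{\Left}\cB\simeq\cB$ is h.f.p.\ by hypothesis, and $\dgS(n)\otimes^{\Left}\cB$ can be written as an explicit finite homotopy pushout of copies of $\cB$. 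Since every h.f.p.\ dg category is a retract, up to derived Morita equivalence, of a \dgcell\ in the sense of Definition~\ref{I-cell}, this suffices.
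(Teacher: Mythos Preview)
Your argument is correct and considerably more detailed than the paper's own proof, which simply asserts that preservation of the terminal object and of split exact sequences is ``clear'' and then, for filtered homotopy colimits, refers to an argument ``analogous to the one of~\cite[Lem.~2.10]{TV}''. Your right-adjoint justification for the first two points makes explicit what the paper leaves implicit.

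For filtered homotopy colimits, your route---reduce to h.f.p.\ test objects~$\cT$ and then show $\cT\otimes^{\bbL}\cB$ remains h.f.p.---is exactly the mechanism behind the To\"en--Vaqui\'e reference. One small remark on your inductive sketch: the sentence ``$\dgS(n)\otimes^{\bbL}\cB$ can be written as an explicit finite homotopy pushout of copies of~$\cB$'' is a bit quick. What makes the induction go through is that $\dgS(n)$ and $\dgD(n)$ are themselves strictly finite dg cells (for instance $\dgS(m)$ arises as $(\underline{R}\amalg\underline{R})\cup_{\dgS(n-1)}\dgD(n)$ along the zero map for suitable~$n$), so that $\dgS(n)\otimes^{\bbL}\cB$ is an iterated homotopy pushout whose pieces are already covered by the induction hypothesis and the base case $\underline{R}\otimes^{\bbL}\cB\simeq\cB$. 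With that clarification, your cell-induction is complete and recovers precisely the content the paper outsources to~\cite{TV}.
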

\begin{proof}
The morphism \eqref{eq:morphD} clearly preserves the terminal object as well as split exact sequences. Since $\cB$ is homotopically finitely presented, \cite[Thm.~3.3(3)]{CT} (where $\rep$ was denoted by $\mathsf{rep}$) implies that
$\rep(\cB,-)$ also preserves filtered homotopy colimits.
\end{proof}
Let $E: \HO(\dgcat) \to \bbD$ be an additive invariant of dg categories and $\cB$ a homotopically finitely presented dg category. Thanks to Lemma~\ref{lem:coef} we can construct a new additive invariant $E(-; \cB):\HO(\dgcat)\to\bbD$ as follows
\begin{eqnarray*}
\cA & \mapsto & E(\cA;\cB):= E(\rep(\cB,\cA))\,.
\end{eqnarray*}
If $\cB=\underline{R}$, then the dg category $\rep(\underline{R}, \cA)$
is derived Morita equivalent to $\cA$ and so $E(-;\underline{R})$ reduces
to~$E$. Hence, when $\cB$ is a general homotopically finitely presented dg
category, $E(-;\cB)$ can be thought of as a ``coefficients variant'' of $E$.

%-----------------------------------------------------------------------
\section{Reordering the model of the additive motivator}\label{se:Madd}
%-----------------------------------------------------------------------
We modify the Quillen model for the additive motivator
$\Madd$ of dg categories. This will be the main technical tool in the proof of Theorem~\ref{thm:Imain}; see \S\,\ref{se:proofmain}.

%----------------------------------------------
\subsection{The original model}\label{sub:original}
%----------------------------------------------
In~\cite[\S\,5]{Additive} the second author introduced the small category
$\dgcatf$ of \emph{finite $I$-cells} as being the smallest full
subcategory of $\dgcat$ which contains the \dgcells\
(see \S\,\ref{sse:dg}) and which is stable under the co-simplicial and
fibrant resolution functors of~\cite[Def.~5.3]{Additive}. Then, he
considered the projective model structure on the category $\Fun(\dgcatf\op, \sSetp)$ of
presheaves of pointed simplicial sets and took its left Bousfield localization
\begin{equation}\label{eq:Bloc}
\Locuns \Fun(\dgcatf\op, \sSetp)
\end{equation}
with respect to sets of morphisms $\widetilde{\cE^s_{un}}$, $p$ and
$\Sigma$; see~\cite[\S\,14]{Additive} for details.
Heuristically, inverting
$\Sigma$ is responsible for inverting Morita
equivalences, inverting $p$ is responsible for preserving the terminal object and,
inverting $\widetilde{\cE^s_{un}}$ is responsible for mapping split exact
sequences of dg categories to split triangles in the homotopy
category.

\begin{remark}\label{rem:regul}
In $\Fun(\dgcatf\op, \sSetp)$, sequential homotopy colimits commute with finite products and homotopy pullbacks and so by Remark~\ref{rem:HO(M)}, the associated derivator is regular (Def.\,\ref{def:srp}). Since the domains and codomains of the sets of morphisms $\widetilde{\cE^s_{un}}$, $p$ and
$\Sigma$ are homotopically finitely presented (Def.\,\ref{def:h-fin}), Remark~\ref{rem:regular} implies that the derivator associated to the left Bousfield localization \eqref{eq:Bloc} is also regular.
\end{remark}
In~\cite[Def.~15.1]{Additive} the second author defined the additive motivator $\Madd$
as the triangulated derivator associated (as in~\ref{sse:deriv}) to
the stable model category of spectra of objects in \eqref{eq:Bloc}, \ie
\begin{equation}\label{eq:Madd}
\Madd:=\HO\Big(\Spe\big(\Locuns \Fun(\dgcatf\op, \sSetp)\big)\Big)\,.
\end{equation}
%

%---------------------------------------------------------
\subsection{A new Quillen model}\label{sec4.1}
%---------------------------------------------------------

Recall from Appendix~\ref{appendix:A} that since $\dgcatf$ is a small
category, the category $\Fun(\dgcatf\op, \spe)$ carries naturally a
simplicial projective model structure. Moreover, we have a natural (Quillen) identification
\begin{equation}\label{eq:identification}
\Spe(\Fun(\dgcatf\op, \sSet_{\bullet})) \simeq \Fun(\dgcatf\op, \spe)\,.
\end{equation}
Now, consider the Yoneda functor
\begin{eqnarray*}
h: \dgcatf  \too  \Fun(\dgcatf\op, \spe) &&  \cB  \mapsto  \Sigma^{\infty} \dgcatf(-,\cB)\,,
\end{eqnarray*}
where every set $\dgcatf(?,\cB)$ is considered as a simplicially-constant simplicial set and
$\Sigma^{\infty}(-)$ denotes the infinite suspension spectrum. If $F$ is a fibrant object in $\Fun(\dgcatf\op, \spe)$, we
have the following weak equivalences\,:
\begin{eqnarray*}
\Map(h(\cB),F) \simeq F(\cB)_0&&\uMap(h(\cB),F) \simeq F(\cB)\,;
\end{eqnarray*}
consult Remark~\ref{rk:presheaves} for the definition of $\Map$ and $\uMap$. We also have a {\em homotopical} Yoneda functor
\begin{eqnarray*}
\underline{h}:\dgcat  \too \Fun(\dgcatf\op, \spe)&& \cA  \mapsto  \Sigma^{\infty} \Map(-,\cA)\,,
\end{eqnarray*}
where $\Map(-,-)$ denotes the homotopy function complex (see App.\,\ref{appendix:A}) of the derived Morita model structure on $\dgcat$ (see \S\,\ref{sub:Morita}). By construction, homotopy (co)limits in
$\Fun(\dgcatf\op, \spe)$ are calculated objectwise. This implies
that the shift models in $\spe$ for the suspension and loop space functors in
$\Ho(\spe)$ (see Jardine~\cite[\S\,1]{Jardine1}) induce objectwise shift models in $\Fun(\dgcatf\op,
\spe)$ for the suspension and loop space functors in the
triangulated category $\Ho(\Fun(\dgcatf\op, \spe))$.

\begin{proposition}\label{newModel}
The additive motivator \eqref{eq:Madd} admits another Quillen
model
$$
\cMadd:= \Loc_{\Omega(\widetilde{\cE^s_{un}}),\, \Omega(p),\, \Omega(\Sigma)}
\Fun(\dgcatf\op, \spe)\,,
$$
where $\Omega(\widetilde{\cE^s_{un}})$, $\Omega(p)$ and
$\Omega(\Sigma)$ are obtained by stabilizing the sets
$\widetilde{\cE^s_{un}}$, $p$ and $\Sigma$ in $\Fun(\dgcatf\op,
\spe)$ under the objectwise loop space functor.
\end{proposition}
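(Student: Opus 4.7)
The plan is to deduce this proposition as a direct consequence of Theorem~\ref{commut}, which asserts that the operations of stabilization and of left Bousfield localization of derivators commute (under suitable hypotheses that we verify below). Writing $S := \widetilde{\cE^s_{un}} \cup p \cup \Sigma$ for the combined localizing set, the original model \eqref{eq:Madd} is
$$
\Madd = \HO\Big(\Spe\big(\Loc_S\, \Fun(\dgcatf\op, \sSetp)\big)\Big),
$$
and the Quillen equivalence \eqref{eq:identification} rewrites $\Spe(\Fun(\dgcatf\op, \sSetp))$ as $\Fun(\dgcatf\op, \spe)$. The task therefore reduces to identifying, at the level of derivators,
$$
\HO\Big(\Spe\big(\Loc_S\, \Fun(\dgcatf\op, \sSetp)\big)\Big) \;\simeq\; \HO\Big(\Loc_{\Omega(S)}\, \Fun(\dgcatf\op, \spe)\Big).
$$

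The first step is to verify the hypotheses of Theorem~\ref{commut}. We need the underlying unstable derivator to be regular and the localizing morphisms to be sufficiently finite. By Remark~\ref{rem:regul}, the derivator $\HO(\Fun(\dgcatf\op, \sSetp))$ is regular since homotopy colimits, finite products and homotopy pullbacks are all computed objectwise in $\sSetp$; moreover the domains and codomains of the morphisms in $\widetilde{\cE^s_{un}}$, $p$ and $\Sigma$ are homotopically finitely presented, so the Bousfield-localized derivator $\HO(\Loc_S \Fun(\dgcatf\op, \sSetp))$ remains regular. This is precisely the input required by Theorem~\ref{commut}.

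The second step is to apply Theorem~\ref{commut} itself. It provides a canonical equivalence of stable derivators between the stabilization of the Bousfield-localized model category and the Bousfield localization of the stabilized model category with respect to the objectwise looped version of the original generating morphisms. Transporting through \eqref{eq:identification}, this yields the claimed Quillen model
$$
\cMadd = \Loc_{\Omega(\widetilde{\cE^s_{un}}),\, \Omega(p),\, \Omega(\Sigma)}\, \Fun(\dgcatf\op, \spe).
$$

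The substantive content lies entirely in Theorem~\ref{commut}, so the main obstacle is packaged away into the appendix. The only subtle point at this level is the bookkeeping for the set $\Omega(S)$: starting from $f \colon X \to Y$ in $\Fun(\dgcatf\op, \sSetp)$, one passes to $\Sigma^\infty f$ in $\Fun(\dgcatf\op,\spe)$ and then takes the family $\{\Omega^n \Sigma^\infty f\}_{n \geq 0}$, and one must check that inverting this full family is what characterizes the bifibrant objects on the stable side. This correspondence between local objects in the two presentations is exactly the content of Theorem~\ref{commut}, so once the regularity and finite-presentation inputs are in place, the proposition follows formally.
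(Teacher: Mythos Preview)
Your proposal is correct and follows essentially the same route as the paper's proof, which is a one-line citation of Theorem~\ref{commut}, Remark~\ref{rem:regul}, the identification~\eqref{eq:identification}, and \cite[Thms.~4.4 and~8.7]{Additive}. The only ingredient you leave implicit is the last one: since Theorem~\ref{commut} is stated purely at the level of derivators, one needs the bridging results \cite[Thm.~4.4]{Additive} (that $\HO(\Loc_S\cM)\simeq\Loc_S\HO(\cM)$) and \cite[Thm.~8.7]{Additive} (that $\HO(\Spe(\cM))\simeq\St(\HO(\cM))$) to translate the derivator equivalence back into a statement about the explicit Quillen models; you gesture at this with ``transporting through~\eqref{eq:identification}'' but it would be cleaner to cite these explicitly.
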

\begin{proof}
The proof follows from the combination of Theorem~\ref{commut}, \cite[Thms.~4.4 and~8.7]{Additive}, Remark~\ref{rem:regul} and the above identification \eqref{eq:identification}.
\end{proof}
Our new construction can be summed up as follows\,; compare with~\cite[Rem.~15.2]{Additive}.
$$
\xymatrix@R=1.5em{
\HO(\dgcatf) \ar[r] \ar[d]_{\HO(h)}
& \HO(\dgcat) \ar@/^1pc/[dl]_-{\Right\underline{h}}
\ar@/^2pc/[ddl]^{\Uadd} \\
\HO\big(\Loc_{\Omega(\Sigma)} \Fun(\dgcatf\op,\spe)\big)\ar[d]
\\
\Madd }
$$
Here, $\HO(\dgcatf)$ is the prederivator associated with the full
subcategory $\dgcatf$ of $\dgcat$ (see~\ref{eq:HO}
and~\cite[\S\,5]{Additive} for details) and $\Uadd$ is the
composition of the functor $\Right\underline{h}$ induced by Yoneda
and the localization morphism
$$ \HO\big(\Loc_{\Omega(\Sigma)} \Fun(\dgcatf\op,\spe)\big) \too \HO\big(\cMadd\big) =\Madd\,.$$
\begin{proposition}\label{prop:fib}
An object $F \in \cMadd$ is
fibrant if and only if the following four conditions are verified\,:
\begin{itemize}
\item[(1)] $F(\cB) \in \spe$ is stably fibrant, for all $\cB \in \dgcatf$.
\item[(2)] For every derived Morita equivalence $\cB \to \cB'$ in $\dgcatf$, the induced morphism $F(\cB') \to F(\cB)$ is a stable weak equivalence in $\spe$.
\item[(3)] $F(\emptyset) \in \spe$ is contractible.
\item[(4)]  Every (left-hand) split exact sequence in $\dgcatf$ (see~\cite[Def.~13.1]{Additive}) gives rise to a (right-hand) homotopy fiber sequence in $\Ho(\spe)$
$$
\begin{array}{lcccr}
\xymatrix{
\mathcal{B}' \ar[r]_I
& \mathcal{B} \ar@<-1ex>[l]_T \ar[r]_P &
\mathcal{B}'' \ar@<-1ex>[l]_S
}
& & \mapsto && F(\mathcal{B}'') \stackrel{F(P)}{\too} F(\mathcal{B}) \stackrel{F(I)}{\too}
F(\mathcal{B}')\,.
\end{array}
$$
\end{itemize}
\end{proposition}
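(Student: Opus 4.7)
The plan is to apply the standard characterization of fibrant objects in a left Bousfield localization: $F \in \cMadd$ is fibrant if and only if $F$ is projectively fibrant in $\Fun(\dgcatf\op, \spe)$ and the induced map $\uMap(B, F) \to \uMap(A, F)$ is a weak equivalence of spectra for every map $f: A \to B$ in $\Omega(\widetilde{\cE^s_{un}}) \cup \Omega(p) \cup \Omega(\Sigma)$. Since the projective model structure on $\Fun(\dgcatf\op,\spe)$ has objectwise fibrations and objectwise weak equivalences, being projectively fibrant is exactly condition~(1). It therefore suffices to match the three localizing sets with conditions (2), (3) and (4).

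For the remaining identifications, the key computational tool is the homotopical Yoneda lemma: for a projectively fibrant $F$ and $\cB \in \dgcatf$, one has $\uMap(h(\cB), F) \simeq F(\cB)$ in $\Ho(\spe)$. Since the localizing sets have been stabilized under the objectwise loop space functor, and $\Omega$ is invertible on a stably fibrant $F$, locality with respect to $\Omega(S)$ is equivalent to locality with respect to $S$. In particular we may compute $\uMap(-, F)$ on the original representables. The set $\Sigma$ in \cite{Additive} consists of the Yoneda images $h(\cB) \to h(\cB')$ of derived Morita equivalences $\cB \to \cB'$ in $\dgcatf$, so $\Omega(\Sigma)$-locality unfolds precisely to condition~(2). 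The set $p$ is concentrated on the unique map $h(\emptyset) \to \ast$ to the terminal object, so $\Omega(p)$-locality unfolds to $F(\emptyset) \simeq \ast$, which is condition~(3).

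It remains to translate $\Omega(\widetilde{\cE^s_{un}})$-locality into condition~(4). Recall from \cite[\S\,13]{Additive} that $\widetilde{\cE^s_{un}}$ is built, for each split exact sequence $\cB' \stackrel{I}{\rightleftarrows} \cB \stackrel{P}{\rightleftarrows} \cB''$ in $\dgcatf$, from a morphism whose domain and codomain are (pushouts of) representables $h(\cB'')$, $h(\cB)$ and $h(\cB')$ assembled so that $\uMap(-,F)$ sends it to the comparison map $F(\cB'') \to \operatorname{hofib}(F(I))$ between $F(\cB'')$ and the homotopy fiber of $F(I) : F(\cB) \to F(\cB')$. Hence $\widetilde{\cE^s_{un}}$-locality is exactly the assertion that this comparison map is a stable weak equivalence, i.e.\ that $F(\cB'') \to F(\cB) \to F(\cB')$ is a homotopy fiber sequence, which is condition~(4). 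Combining the three identifications yields the proposition.

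The main obstacle is this last bookkeeping step: carefully unwinding the definition of $\widetilde{\cE^s_{un}}$ from \cite{Additive} to identify locality with respect to it with the homotopy fiber sequence condition, rather than merely with the weaker condition that $F$ sends split exact sequences to direct sum decompositions. The passage from the simplicial set model \eqref{eq:Bloc} to the spectrum model of Proposition~\ref{newModel} is what converts the additivity property (direct sums) into the stronger fiber sequence property appearing in condition~(4), and this is precisely where the stabilization under $\Omega$ plays its role.
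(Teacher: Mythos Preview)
Your argument is correct and takes the same route as the paper's: identify condition~(1) with projective fibrancy, then unpack locality with respect to each of $\Omega(\Sigma)$, $\Omega(p)$, $\Omega(\widetilde{\cE^s_{un}})$ via Yoneda, using stable fibrancy of the values $F(\cB)$ to pass from levelwise to stable equivalences. The only caveat is that the \emph{standard} characterization of local objects (\cite[Prop.~3.4.1]{Hirschhorn}) is stated for the simplicial $\Map$, not the spectral $\uMap$; the paper works level-by-level with $\Map$ (so that $\Omega(S)$-locality literally becomes a condition on each $F(\cB)_n$) and then upgrades via stable fibrancy, which is precisely what your $\uMap$-packaging and the phrase ``$\Omega(S)$-locality is equivalent to $S$-locality'' are encoding---so this is a cosmetic repackaging rather than a different argument.
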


\begin{proof}
Condition~(1) corresponds to the fact that $F$ is fibrant in
$\Fun(\dgcatf\op, \spe)$ since we use the projective model. Thanks to the shift models in $\Fun(\dgcatf\op,
\spe)$ for the suspension and loop space functors in
$\Ho(\Fun(\dgcatf\op, \spe))$, the construction of the localized
model structure yields\,: An object $F$ is $\Omega(\Sigma)$-local if and only if for every derived Morita equivalence $\cB \to \cB'$ in $\dgcatf$, the morphism $F(\cB') \rightarrow F(\cB)$ is a levelwise weak equivalence in $\sSet_{\bullet}$. Since $F(\cB')$ and $F(\cB)$ are stably fibrant this is equivalent to condition~(2). An object $F$ is $\Omega(p)$-local if and only if $F(\emptyset)_n$ is contractible for every $n \geq 0$. Since $F(\emptyset)$ is stably fibrant this is equivalent to condition~(3). We now discuss condition~(4). The construction of the set
$\Omega(\widetilde{\cE^s_{un}})$ (see~\cite[Not.~14.5]{Additive} and
Proposition~\ref{newModel}) and the fact that the functor
$$ \Map(?,F): \Ho(\Fun(\dgcatf\op, \spe))\op \longrightarrow \Ho(\sSet_{\bullet})$$
sends homotopy cofiber sequences into homotopy fiber sequences, implies that an object $F$ is $\Omega(\widetilde{\cE^s_{un}})$-local if and only if every split exact sequence in $\dgcatf$ induces a homotopy fiber sequence in $\Ho(\sSet_{\bullet})$ for every $n \geq 0$; see~\cite[Prop.~14.8]{Additive}.
$$
\begin{array}{lcccr}
\xymatrix{
\mathcal{B}' \ar[r]_I & \mathcal{B} \ar@<-1ex>[l]_T \ar[r]_P &
\mathcal{B}'' \ar@<-1ex>[l]_S
}
&& \mapsto && F(\mathcal{B}'')_n \stackrel{F(P)_n}{\too} F(\mathcal{B})_n \stackrel{F(I)_n}{\too}
F(\mathcal{B}')_n
\end{array}
$$
Once again, since $F(\cB')$, $F(\cB)$ and $F(\cB'')$ are stably fibrant, this is equivalent to condition~(4). The proof is then concluded, thanks to general Bousfield localization theory; see~\cite[Prop.~3.4.1]{Hirschhorn}.
\end{proof}
Using the description of the fibrant objects of Proposition~\ref{prop:fib}, we now prove a key technical result.
\begin{proposition}\label{homotcol}
For every small dg category $\cC \in \dgcatf$, the functor
$$ \uMap(h(\cC), -): \Ho(\cMadd) \longrightarrow \Ho(\spe)$$
preserves homotopy colimits.
\end{proposition}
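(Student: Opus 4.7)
The plan is to work with the Quillen model $\cMadd=\Loc_{\Omega(\widetilde{\cE^s_{un}}),\Omega(p),\Omega(\Sigma)}\Fun(\dgcatf\op,\spe)$ furnished by Proposition~\ref{newModel} and to exploit the explicit characterization of fibrant objects provided by Proposition~\ref{prop:fib}. The representable presheaf $h(\cC)=\Sigma^\infty \dgcatf(-,\cC)$ is cofibrant in the projective model structure on $\Fun(\dgcatf\op,\spe)$, so that for every presheaf $F$ that is fibrant in $\cMadd$ one has a natural stable weak equivalence $\uMap(h(\cC),F)\simeq F(\cC)$. Homotopy colimits in $\Fun(\dgcatf\op,\spe)$ are computed objectwise in $\spe$, and left Bousfield localization does not change them at the level of underlying objects; only the fibrant replacement is altered. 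The task therefore reduces to showing that evaluation at $\cC$, precomposed with the $\cMadd$-fibrant replacement functor, commutes with homotopy colimits.

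The functor $\uMap(h(\cC),-)$ is triangulated between stable triangulated categories, hence exact, so it automatically preserves finite homotopy colimits. Moreover, arbitrary coproducts in $\Ho(\cMadd)$ can be expressed as filtered homotopy colimits of finite coproducts, and every small homotopy colimit in a stable setting is built from finite ones and filtered ones. It therefore suffices to show that $\uMap(h(\cC),-)$ preserves filtered homotopy colimits.

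The main step is to show that a filtered homotopy colimit of $\cMadd$-fibrant objects, taken objectwise in $\Fun(\dgcatf\op,\spe)$, remains $\cMadd$-local, so that no further fibrant replacement is needed. Given a filtered diagram $\{F_i\}$ of $\cMadd$-fibrant objects, I would verify the four conditions of Proposition~\ref{prop:fib} for $F:=\hocolim_i F_i$. Conditions (1)--(3) concerning stable fibrancy, weak equivalence along Morita equivalences, and contractibility at $\emptyset$ are immediate from the stability of these properties under filtered homotopy colimits in $\spe$. The crux is condition (4): filtered homotopy colimits of the homotopy fiber sequences attached to split exact sequences should again be homotopy fiber sequences. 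This is precisely the regularity of $\spe$ behind Remark~\ref{rem:regul}, namely the commutation of filtered homotopy colimits with finite homotopy limits. Granting (4), one obtains $\uMap(h(\cC),\hocolim_i F_i)\simeq F(\cC)\simeq \hocolim_i F_i(\cC)\simeq \hocolim_i \uMap(h(\cC),F_i)$, as required. The principal obstacle is this verification of (4) after filtered homotopy colimit; everything else is routine bookkeeping assembled via the exactness of $\uMap(h(\cC),-)$.
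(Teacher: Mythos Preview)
Your argument is correct, but it takes a longer route than the paper's. Both proofs hinge on showing that a homotopy colimit of $\cMadd$-fibrant objects remains $\cMadd$-local (conditions (2)--(4) of Proposition~\ref{prop:fib}; condition~(1) is just projective fibrancy and is not actually preserved on the nose---only up to an objectwise fibrant replacement, which is harmless). The difference lies in how condition~(4) is handled. You restrict to \emph{filtered} homotopy colimits and invoke regularity of $\spe$ (filtered homotopy colimits commute with finite homotopy limits), which forces the preliminary reduction of arbitrary homotopy colimits to finite ones plus filtered ones via exactness of $\uMap(h(\cC),-)$. The paper instead treats \emph{arbitrary} diagrams in one stroke: since $\Ho(\spe)$ is triangulated, each homotopy fiber sequence $F_j(\cB'')\to F_j(\cB)\to F_j(\cB')$ is simultaneously a homotopy cofiber sequence, so its homotopy colimit over $J$ is again a homotopy cofiber sequence, hence a homotopy fiber sequence. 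This use of stability makes the filtered/finite decomposition and the separate exactness argument unnecessary. Your approach would be the right one in a merely regular (non-stable) target, but here the paper's stability argument is shorter and more to the point.
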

\begin{proof}
We start by observing that by construction, the result holds in the
stable model category $\Fun(\dgcatf\op, \spe)$. Thanks to Remark~\ref{rem:HFS} it suffices to prove the following\,: If $\{F_j\}_{j \in J}$ is a diagram of fibrant objects in the localized category
$\cMadd$, then its homotopy colimit satisfies conditions~(2)-(4) of Proposition~\ref{prop:fib}. Since homotopy colimits in
$\Fun(\dgcatf\op, \spe)$ are calculated objectwise, conditions~(2)-(3) are clearly verified. In what concerns condition~(4), notice
that $\Ho(\spe)$ is a triangulated category and so the homotopy
fiber sequences
$$ F_j(\cB'') \longrightarrow F_j(\cB) \longrightarrow F_j(\cB')$$
are also homotopy cofiber sequences. This implies that
$$ \underset{j \in J}\hocolim\, F_j(\cB'') \longrightarrow \underset{j \in J}\hocolim\, F_j(\cB) \longrightarrow \underset{j \in J}\hocolim\, F_j(\cB')$$
is a homotopy cofiber sequence and so also a homotopy fiber sequence. This shows condition (4) and so the proof is finished.
\end{proof}
We finish this subsection by describing an explicit set of generators.
\begin{proposition}\label{GenDG}
The set of \dgcells\ $\{ h(\cB)\,|\,\cB \in \dgcatsf\}$ (Def.\,\ref{I-cell}) form a set of homotopic generators (Def.~\ref{def:HG}) in $\cMadd$.
\end{proposition}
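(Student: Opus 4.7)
The plan is to show that a fibrant object $F\in\cMadd$ with $\uMap(h(\cB),F)\simeq 0$ for every $\cB\in\dgcatsf$ must itself be trivial. Since one can replace $F$ by a fibrant replacement without altering $\uMap(h(\cB),F)$ up to weak equivalence, I may assume from the outset that $F$ is fibrant in $\cMadd$. Then the Yoneda computation recorded just before Proposition~\ref{newModel} gives a natural stable weak equivalence
$$\uMap(h(\cB),F)\ \simeq\ F(\cB) \qquad (\cB\in\dgcatf),$$
so the hypothesis translates into $F(\cB)\simeq 0$ in $\Spe$ for every $\cB\in\dgcatsf$.

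The crux of the argument is then to promote this vanishing from $\dgcatsf$ to all of $\dgcatf$. Recall from~\S\,\ref{sub:original} that $\dgcatf$ is the smallest full subcategory of $\dgcat$ which contains the \dgcells\ and is stable under the co-simplicial and fibrant resolution functors of~\cite[Def.~5.3]{Additive}. These resolutions are by construction built out of cofibrant/fibrant replacements in the Reedy and derived Morita model structures, so each object produced from a \dgcell\ $\cB'\in\dgcatsf$ is connected to $\cB'$ by a zig-zag of derived Morita equivalences inside $\dgcatf$. Condition~(2) of Proposition~\ref{prop:fib} guarantees that the fibrant presheaf $F$ sends every derived Morita equivalence in $\dgcatf$ to a stable weak equivalence, so $F(\cB)\simeq F(\cB')\simeq 0$ for every $\cB\in\dgcatf$.

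To conclude, I would use the fact that weak equivalences in the projective model structure on $\Fun(\dgcatf\op,\Spe)$ are detected objectwise, and Bousfield localisation only adds further weak equivalences. Hence objectwise contractibility of $F$ on $\dgcatf$ forces $F\simeq 0$ in $\cMadd$. This verifies the defining property of a set of homotopic generators (Definition~\ref{def:HG}) for the set $\{h(\cB)\mid \cB\in\dgcatsf\}$; note that this set is genuinely small because a \dgcell\ is obtained from $\emptyset$ by finitely many pushouts along the countable set $I$.

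The main obstacle I expect is the reduction step from $\dgcatf$ to $\dgcatsf$: one has to argue carefully that every stage of the co-simplicial and fibrant resolution functors of~\cite[Def.~5.3]{Additive} is derived Morita equivalent to the original \dgcell, so that condition~(2) of Proposition~\ref{prop:fib} can be invoked. Everything else is a formal consequence of the Yoneda identification together with the explicit characterisation of fibrant objects provided by Proposition~\ref{prop:fib}.
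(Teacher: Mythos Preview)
Your argument is essentially correct and follows the same route as the paper: the key point in both is that every object of $\dgcatf$ is derived Morita equivalent to a strictly finite dg cell, so that condition~(2) of Proposition~\ref{prop:fib} lets one propagate information from $\dgcatsf$ to all of $\dgcatf$. The paper packages this slightly differently---it first invokes Lemma~\ref{homgenerators} to pass from $\Fun(\dgcatf\op,\spe)$ to the $\Omega(\Sigma)$-localization (where Morita invariance is already enforced on fibrant objects), reduces the generating set there, and then applies Lemma~\ref{homgenerators} once more to reach $\cMadd$---whereas you work directly in $\cMadd$ using the full fibrant-object characterization of Proposition~\ref{prop:fib}. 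Both approaches amount to the same computation.

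One small point: Definition~\ref{def:HG} asks that the $h(\cB)$ detect \emph{weak equivalences}, not merely trivial objects. Your reduction to the latter is valid in this stable setting (via homotopy cofibers), but you do not say so. In fact it is just as easy, and more faithful to the definition, to run your argument for an arbitrary map $f\colon F\to F'$: after fibrant replacement the hypothesis says $QF(\cB)\to QF'(\cB)$ is a stable equivalence for every $\cB\in\dgcatsf$, condition~(2) extends this to every $\cB\in\dgcatf$, and then $Qf$ is an objectwise---hence local---weak equivalence.
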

\begin{proof}
Notice that the objects $\{ h(\cB)\,|\,\cB \in \dgcatf\}$ are homotopic generators in the model category $\Fun(\dgcatf\op, \spe)$ by the
very definition of weak equivalences. Recall from \S\,\ref{sub:original} that $\dgcatf$ is the smallest full subcategory of $\dgcat$ which contains the \dgcells\ and which is stable under the co-simplicial and fibrant resolution functors of~\cite[Def.~5.3]{Additive}. Therefore, every object in $\dgcatf$ is derived Morita
equivalent to an object in $\dgcatsf$. This implies by
Lemma~\ref{homgenerators} and Proposition~\ref{prop:fib}, that the
objects $\{ h(\cB)\,|\,\cB \in \dgcatsf\}$ are homotopic generators in
$\Loc_{\Omega(\Sigma)}\Fun(\dgcatf\op, \spe)$. Once again by
Lemma~\ref{homgenerators} we can localize further with respect
to the sets $\Omega(\widetilde{\cE^s_{un}})$ and $\Omega(p)$, which completes the proof.
\end{proof}

%-----------------------------------------------------------------------
\section{Fundamental additive functor}\label{se:univ}
\setcounter{equation}{0}
%-----------------------------------------------------------------------

We introduce the notion of additive functor on the orbit category, give several examples, and construct the fundamental functor which satisfies additivity.

Note that every $R$-category (see \S\,\ref{sub:linearization}) can be naturally considered as a dg
category (with complexes of morphisms concentrated in degree zero). Given a group~$G$, we thus obtain a composed functor
\begin{equation}\label{eq:composed}
\Or(G) \stackrel{\overline{?}}{\too} \mathsf{Grp} \stackrel{R[-]}{\too} R\text{-}\cat \subset \dgcat \,.
\end{equation}
This functor is the basic piece. We now consider all functors obtained from composing it with an additive invariant of dg categories.
\begin{definition}\label{addfunctor}
Let $\cM$ be a stable model category (see Rem.\,\ref{rem:HO(M)}) and $\bfE:
\Or(G) \to \cM$ a functor. We say that $\bfE$ is {\em additive} if
it factors through \eqref{eq:composed} followed by a functor $E:\dgcat \to \cM$ whose associated
morphism of derivators $E:\HO(\dgcat) \to \HO(\cM)$ is an additive
invariant of dg categories (see \S\,\ref{sub:NCM}).
\end{definition}
The factorization of Definition~\ref{addfunctor} should not be confused with the one we want to
establish in Theorem~\ref{thm:factor} (that is, via the fundamental
additive functor $\Efund$). We rather restrict attention to functors on the orbit
category that only depend on the associated dg category. This is a
mild restriction since many of the classical functors have been
extended to dg categories, as explained in \S\,\ref{se:add}.

\begin{examples}\label{ex:coef}
Recall from \S\,\ref{se:add} several examples of functors $E:
\dgcat \to \cM$ defined on the category of dg categories (\eg\ connective, non-connective, and homotopy $K$-theory, Hochschild and cyclic
homology, and topological Hochschild homology), whose associated morphisms of
derivators $E: \HO(\dgcat) \to \HO(\cM)$ are additive invariant
of dg categories. By pre-composing them with the functor \eqref{eq:composed}
we obtain several examples of additive functors $\bfE:\Or(G)\to\cM$ in
the sense of Definition~\ref{addfunctor}. Moreover, if $\cB$ is a
homotopically finitely presented dg category $\cB$, we obtain a ``coefficients variant'' $\bfE(-;\cB)$ (see \S\,\ref{sub:coef}) defined as follows
\begin{eqnarray*}
\Or(G) \ni G/H & \mapsto & \bfE(G/H;\cB):= E(\rep(\cB, R[\overline{G/H}]))\,.
\end{eqnarray*}
Note that if $\cB=\underline{R}$, the additive functor $\bfE(-;\cB)$ reduces to the composition
\begin{equation*}
\bfE: \Or(G) \stackrel{\overline{?}}{\too} \mathsf{Grp} \stackrel{R[-]}{\too} R\text{-}\cat \too \dgcat \stackrel{E}{\too} \cM\,.
\end{equation*}
\end{examples}
\begin{definition}
The {\em fundamental additive functor} $\Efund$ is the
composition
$$
\Or(G) \stackrel{\overline{?}}{\too} \mathsf{Grp}
\stackrel{R[-]}{\too} R\text{-}\cat \subset \dgcat
\stackrel{\Uadd}{\too} \cMadd\,.
$$
The universality theorem~\cite[Thm.~15.4]{Additive} (see equivalence~\eqref{eq:cat}) yields\,:
\end{definition}
\begin{theorem}\label{thm:factor}
Let $G$ be a group and $\bfE:\Or(G) \to \cM$ an additive functor. Then there exists a homotopy colimit preserving morphism of derivators $\overline{E}:\Madd \to \HO(\cM)$, which makes the following diagram commute (up to isomorphism)
\begin{equation*}
\xymatrix{
\Or(G) \ar[r]^{\Efund} \ar[dr]_{\bfE} & \cMadd \ar[r] & \Madd(e) \ar[d]^{\overline{E}(e)} \\
&\cM \ar[r] & \Ho(\cM)\,.
}
\end{equation*}
\end{theorem}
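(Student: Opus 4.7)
The plan is to apply the universal property of the additive motivator recorded in equivalence~\eqref{eq:cat} to the additive invariant hiding inside the additive functor $\bfE$. Concretely, since $\bfE$ is additive in the sense of Definition~\ref{addfunctor}, by assumption there is a factorization
\[
\bfE \;=\; \Bigl(\Or(G) \xrightarrow{\overline{?}} \mathsf{Grp} \xrightarrow{R[-]} R\text{-}\cat \subset \dgcat \xrightarrow{E} \cM \Bigr),
\]
where the functor $E:\dgcat\to\cM$ induces a morphism of derivators $E:\HO(\dgcat)\to\HO(\cM)$ which is an additive invariant.

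Next I would invoke the universal property~\eqref{eq:cat} (that is,~\cite[Thm.~15.4]{Additive}) with $\bbD=\HO(\cM)$, noting that $\HO(\cM)$ is a strong triangulated derivator since $\cM$ is stable (Remark~\ref{rem:HO(M)}). The essential surjectivity of $(\Uadd)^\ast$ produces a homotopy colimit preserving morphism of derivators $\overline{E}:\Madd\to\HO(\cM)$ together with an isomorphism $\overline{E}\circ\Uadd\cong E$ of additive invariants on $\HO(\dgcat)$.

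Finally, evaluating this isomorphism at the terminal base category $e$ yields a natural isomorphism $\overline{E}(e)\circ\Uadd(e)\cong E(e)$ between functors $\Ho(\dgcat)\to\Ho(\cM)$, where $\Uadd(e)$ is the composite $\dgcat\to\cMadd\to\Madd(e)$ appearing in the top row of the diagram and $E(e)$ is $\dgcat\xrightarrow{E}\cM\to\Ho(\cM)$. Pre-composing this natural isomorphism with the functor $R[-]\circ\overline{?}:\Or(G)\to\dgcat$ delivers the commutativity of the diagram, because the top path becomes $\overline{E}(e)\circ\Uadd(e)\circ R[-]\circ\overline{?}$ while the bottom path becomes $E(e)\circ R[-]\circ\overline{?}$, which equals the localization of $\bfE$ to $\Ho(\cM)$ by the assumed factorization of $\bfE$.

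There is really no hard step here: once the definition of an additive functor on $\Or(G)$ is unpacked, the theorem is a direct instance of the universal property of $\Uadd$. The only subtlety to watch is bookkeeping—making sure that $\overline{E}$ is produced as a morphism of derivators (so one can later exploit that it preserves \emph{all} homotopy colimits, as needed in Corollary~\ref{cor:ass}) rather than merely a functor at the base, and that the commutativity in the statement is correctly interpreted at the level of $\Madd(e)$ and $\Ho(\cM)$ after localization, which is exactly the level at which~\eqref{eq:cat} supplies the isomorphism.
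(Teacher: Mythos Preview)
Your proof is correct and follows exactly the same route as the paper: unpack Definition~\ref{addfunctor} to obtain the additive invariant $E:\HO(\dgcat)\to\HO(\cM)$, then apply the universal property~\eqref{eq:cat} (i.e.,~\cite[Thm.~15.4]{Additive}) to produce the homotopy colimit preserving $\overline{E}$, and read off the commutativity at the base category. Your version is actually slightly more explicit than the paper's, in particular in checking that $\HO(\cM)$ is a strong triangulated derivator so that~\eqref{eq:cat} applies.
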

\begin{proof}
By Definition~\ref{addfunctor}, $\bfE$ factors through a functor
$E:\dgcat \to \cM$ whose associated morphism of derivators
$E:\HO(\dgcat) \to \HO(\cM)$ is an additive invariant of dg
categories. By~\cite[Thm.~15.4]{Additive}, see~\eqref{eq:cat}, this $E$ descends
to a homotopy colimit preserving morphism of derivators $\overline{E}:\Madd \to \HO(\cM)$,
whose value at the base category makes the above diagram commute (up to isomorphism).
\end{proof}
Using the general notion of assembly property of \S\,\ref{assemblage}, we get\,:
\begin{corollary}\label{cor:ass}
Let $G$ be a group and let $\cF$ be a family of subgroups. If the
fundamental additive functor $\Efund$ has the $\cF$-assembly
property, then so do all additive functors.
\end{corollary}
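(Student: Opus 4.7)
The plan is to take an arbitrary additive functor $\bfE:\Or(G)\to\cM$ and exhibit its $(\bfE,\cF,G)$-assembly map as the image, under a homotopy colimit preserving morphism of derivators, of the $(\Efund,\cF,G)$-assembly map. Once this is done, the hypothesis that the latter is an equivalence will immediately force the former to be one as well.

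First, by Definition~\ref{addfunctor} together with Theorem~\ref{thm:factor}, there exists a homotopy colimit preserving morphism of derivators $\overline{E}:\Madd\to\HO(\cM)$ such that the composite of $\Efund$ with the evaluation of $\overline{E}$ at the base category agrees (up to natural isomorphism) with the composite of $\bfE$ with the localization $\cM\to\Ho(\cM)$. In particular, at the level of prederivators the functor $\bfE$ factors (up to isomorphism) through $\Efund$ followed by $\overline{E}$.

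Next, I would recall the general principle that a homotopy colimit preserving morphism of derivators commutes with the formation of $\hocolim$ over any small category. Applied to the two inclusions $\Or(G,\cF)\subset \Or(G)$ and the constant functor $\Or(G)\to e$, this means that $\overline{E}$ sends the assembly map
\[
\underset{\Or(G,\cF)}\hocolim\,\Efund\;\longrightarrow\;\underset{\Or(G)}\hocolim\,\Efund
\]
of~\eqref{eq:Gassemb} (formed in $\cMadd$) to the corresponding map
\[
\underset{\Or(G,\cF)}\hocolim\,\bfE\;\longrightarrow\;\underset{\Or(G)}\hocolim\,\bfE
\]
in $\Ho(\cM)$, up to the natural isomorphism $\overline{E}\circ\Efund\simeq\bfE$ supplied by Theorem~\ref{thm:factor}. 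Thus the $(\bfE,\cF,G)$-assembly map is (isomorphic to) the image under $\overline{E}$ of the $(\Efund,\cF,G)$-assembly map.

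The conclusion is then automatic: if the latter is an isomorphism in $\Madd(e)$, then any functor, and in particular $\overline{E}$, sends it to an isomorphism in $\Ho(\cM)$, which is precisely the $\cF$-assembly property for $\bfE$. The main subtlety to check carefully is the second step, namely that the factorization of Theorem~\ref{thm:factor} is genuinely compatible with the formation of homotopy colimits over $\Or(G,\cF)$ and $\Or(G)$; but this is exactly what it means for $\overline{E}$ to be a homotopy colimit preserving morphism of derivators, so nothing further is required beyond invoking that property.
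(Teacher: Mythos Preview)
Your proposal is correct and follows essentially the same approach as the paper's proof: apply the homotopy colimit preserving morphism $\overline{E}$ furnished by Theorem~\ref{thm:factor} to the $(\Efund,\cF,G)$-assembly map, and use preservation of homotopy colimits to identify the result with the $(\bfE,\cF,G)$-assembly map. The paper states this in two sentences, while you spell out the same argument in more detail.
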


\begin{proof}
Simply apply the morphism $\overline{E}$ to the $(\Efund,\cF,G)$-assembly map and use the fact
that $\overline{E}$ preserves arbitrary homotopy colimits.
\end{proof}

%--------------------------------
\section{Reduction to \dgcells}\label{se:proofmain}
%--------------------------------

\begin{proof}[Proof of Theorem~\ref{thm:Imain}]
The main ingredient of the proof is the following result:
\begin{theorem}{(see\footnote{In {\em loc.}~{\em cit.} $\uMap$ was denoted by $\Hom^{\mathrm{Sp}^\mathbb{N}}$, $K$ by $K^c$, and $\rep$ by $\mathsf{rep}_{\mathrm{mor}}$.} \cite[Thm.~15.10]{Additive})}\label{thm:co-representability}
Given dg categories $\cA$ and $\cB$, with $\cB$ homotopically finitely presented, there is a canonical stable weak equivalence of spectra
$$\uMap_{\Madd}\left(\,\Uadd(\cB),\Uadd(\cA)[1]\,\right) \cong K(\rep(\cB,\cA))\,.$$
\end{theorem}
Thanks to Corollary~\ref{cor:ass}, condition~(1) of Thm.\,\ref{thm:Imain} implies condition~(2). Thanks to~\cite[Prop.~5.2 and Ex.~5.1]{Additive} a dg
category is homotopically finitely presented (Def.\,\ref{def:h-fin})
if and only if it is derived Morita equivalent to a retract in $\Ho(\dgcat)$ (see \S\,\ref{sub:Morita}) of a
\dgcell\ (Def.\,\ref{I-cell}). Therefore, every \dgcell\ is homotopically finitely presented, and so condition~(2) implies condition~(3).
We now show that condition~(3) implies condition~(1). Recall the construction of the fundamental additive functor
$$
\Efund: \Or(G) \stackrel{\overline{?}}{\too} \mathsf{Grp} \stackrel{R[-]}{\too} R\text{-}\cat \subset \dgcat \stackrel{\Uadd}{\too} \cMadd\,.
$$
Assuming condition~(3), we need to show that the induced map
$$
\underset{\Or(G, \cF)}\hocolim \,\, \Uadd(R[\overline{G/H}]) \too
\Uadd(R[\overline{G/G}])
$$
is an isomorphism in $\Madd(e)$. Since the category $\Madd(e)$ is
triangulated, it suffices to show that the suspension map
$$
\underset{\Or(G, \cF)}\hocolim \,\, \Big(
\Uadd(R[\overline{G/H}])[1]\Big) \simeq \Big(\underset{\Or(G, \cF)}\hocolim
\,\, \Uadd(R[\overline{G/H}])\Big)[1] \too \Uadd(R[\overline{G/G}])[1]
$$
is an isomorphism. By Proposition~\ref{GenDG}, the set of objects
$\{ h(\cB)\,|\,\cB \in \dgcatsf\}$ form a set of homotopic generators in $\cMadd$ and so it is enough to prove
that, for every $\cB \in \dgcatsf$, the induced map of spectra
$$
\uMap\big(h(\cB), \underset{\Or(G, \cF)}\hocolim \,\,
\Uadd(R[\overline{G/H}])[1]\big) \too
\uMap\big(h(\cB),\Uadd(R[\overline{G/G}])[1]\big)
$$
is a stable weak equivalence. By Proposition~\ref{homotcol}, the
functor $\uMap(h(\cB),-)$ preserves homotopy colimits and so we
have
$$
\uMap\big(h(\cB), \underset{\Or(G, \cF)}\hocolim \,\,
\Uadd(R[\overline{G/H}])[1]\big) \simeq \underset{\Or(G,
\cF)}\hocolim\,\, \uMap\big(h(\cB),
\Uadd(R[\overline{G/H}])[1]\big)\,.
$$
By construction of $\Madd$ (see \cite[\S15]{Additive}), we observe that since $\cB \in \dgcatsf$ the object $h(\cB)$ identifies with $\Uadd(\cB)$. Hence, the above co-representability theorem~\ref{thm:co-representability} provides stable weak equivalences
$$ \uMap(h(\cB), \Uadd(R[\overline{G/H}])[1]) \cong K(\rep(\cB, R[\overline{G/H}]))\,,$$
for every $\cB \in \dgcatsf$ and $H \in \Or(G,\cF)$. In conclusion,
we are reduced to show that for every \dgcell\ $\cB$, the map
$$\underset{\Or(G,\cF)}\hocolim \,\, K(\rep(\cB, R[\overline{G/H}])) \too
K(\rep(\cB, R[\overline{G/G}]))
$$
is a stable weak equivalence. But now, this is precisely our hypothesis,
namely that the additive functors $\KB{-}$ have the $\cF$-assembly
property for~$G$.
\end{proof}

%-----------------------------------------------------------------------
\appendix
%-----------------------------------------------------------------------
%-----------------------------------------------------------------------
\section{Model category tools}\label{appendix:A}
\setcounter{equation}{0}
%-----------------------------------------------------------------------

In this appendix we recall some material from the theory of Quillen model structures~\cite{Quillen} and prove a technical lemma concerning homotopic generators.

Let $\sSet$ (resp.\ $\sSetp$) be the model category of (pointed) simplicial sets ; see Goerss-Jardine~\cite[\S\, I]{GJ}.
Given a Quillen model category $\cM$, we denote by
$\Map(-,-):\cM\op\times\cM\to\Ho(\sSet)$ its homotopy function complex; see~\cite[Def.~17.4.1]{Hirschhorn}. Recall that if $\cM$ is a simplicial
model category~\cite[\S\, II.3]{GJ}, its homotopy function complex is
given, for $X, Y \in \cM$, by the simplical set $ \Map(X,Y)_n := \cM(X_c \otimes \Delta[n], Y_f)$, where $X_c$ is a cofibrant resolution of $X$ and $Y_f$ is a fibrant
resolution of $Y$. Moreover, if $\Ho(\cM)$ denotes the homotopy
category of $\cM$, we have an isomorphism
$\pi_0\Map(X,Y)\simeq \Ho(\cM)(X,Y)$.
\begin{definition}
\label{def:h-fin}
An object $X$ in $\cM$ is {\em homotopically finitely presented} if
for any diagram $Y:J\to \cM$ in $\cM$ (for any shape, \ie small
category, $J$), the induced map
$$\underset{j \in J}{\hocolim}\, \Map(X,Y_j) \too \Map(X,\underset{j \in J}{\hocolim}\, Y_j)$$
is an isomorphism in $\Ho(\sSet)$.
\end{definition}

Let $\spe$ be the (model) category of spectra~\cite[\S\, X.4]{GJ}. If
$X$ is a spectrum, we denote by $X[n],\, n \geq 0$ its {\em
$n^{\textrm{th}}$ suspension}, \ie the spectrum defined as $X[n]_m:=
X_{n+m},\,\, m \geq 0$. If $X$ and $Y$ are two spectra, we define
its {\em homotopy function spectrum} $\uMap(X,Y)$ by $\uMap(X,Y)_n:= \Map(X, Y[n])$, where the bonding maps are the natural ones.

\begin{remark}\label{rk:presheaves}
Let $I$ be a small category. By~\cite[Thm.~3.3]{Jardine1}, the
category of presheaves of spectra\footnote{In \cite{Jardine1} the category $\Fun(I\op,\spe)$ of presheaves of spectra is denoted by $\spe(I)$. We do not use that notation since it already appears in \eqref{eq:Madd} and \eqref{eq:identification} with a different meaning.} $\Fun(I\op,\spe)=\spe^{I\op}$ carries the {\em projective} model structure,
with weak-equivalences and fibrations defined objectwise. If we
denote by $\Map(-,-)$ its homotopy function complex, the {\em
homotopy function spectrum} between two presheaves $F$ and $G$ is given
(as in the case of spectra) by $\uMap(F,G)_n:= \Map(F, G[n])$, where $G[n]$ is the {\em $n^{\textrm{th}}$ objectwise suspension}
of~$G$.
\end{remark}
\begin{remark}\label{rem:HFS}
Let $S$ be a set of morphisms in $\Fun(I\op,\spe)$ and
$\Loc_S(\Fun(I\op,\spe))$ its left Bousfield
localization with respect to~$S$\,; see~\cite[Thm.~4.1.1]{Hirschhorn}. Since the categories $\Fun(I\op,\spe)$ and
$\Loc_S(\Fun(I\op,\spe))$ have the same cofibrations, and hence the same trivial fibrations, the simplicial cofibrant replacement functor $\Gamma^\ast$ (see \cite[\S16]{Hirschhorn}) is the same in both cases. Hence, the homotopy function spectrum of $\Loc_S(\Fun(I\op,\spe))$ can be computed as $\uMap(-,Q(-))$, where $Q(-)$ is a
fibrant resolution functor in $\Loc_S(\Fun(I\op,\spe))$.
\end{remark}
Let $\cM$ be a left Bousfield localization
of $\Fun(I\op,\spe)$.
\begin{definition}\label{def:HG}
A set of \emph{homotopic generators} is a set of objects $\{G_j\}_{j \in J}$ in $\cM$ such that a morphism $f:F \to F'$ is a weak equivalence in $\cM$ if (and only if) for every object $G_j$ the induced map of spectra
\begin{equation}\label{eq:induced}
f_{\ast}:  \uMap(G_j,F) \too \uMap(G_j,F')
\end{equation}
is a stable weak equivalence.
\end{definition}

\begin{lemma}\label{homgenerators}
Let $S$ be a set
of morphisms in~$\cM$. If the $\{G_j\}_{j \in J}$ are homotopic generators in $\cM$
then they are homotopic generators in $\Loc_S(\cM)$ as well.
\end{lemma}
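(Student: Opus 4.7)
The plan is to exploit two standard facts from the theory of left Bousfield localization, as recalled in Remark~\ref{rem:HFS}: first, that the homotopy function spectrum in $\Loc_S(\cM)$ can be computed as $\uMap(-,Q(-))$, where $Q$ denotes a fibrant replacement functor in $\Loc_S(\cM)$; and second, that between $S$-fibrant objects (\ie\ fibrant in $\Loc_S(\cM)$), the weak equivalences of $\Loc_S(\cM)$ coincide with the weak equivalences of $\cM$ (see~\cite[Thm.~3.2.13]{Hirschhorn}).

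Let $f:F\to F'$ be a morphism in $\cM$; we need to establish the biconditional characterising weak equivalences of $\Loc_S(\cM)$ via the $G_j$. For the forward direction, assume $f$ is a weak equivalence in $\Loc_S(\cM)$. Then in the commutative square
$$
\xymatrix@R=1.5em{ F \ar[r] \ar[d]_-{f} & Q(F) \ar[d]^-{Q(f)} \\ F' \ar[r] & Q(F')}
$$
the horizontal arrows are weak equivalences in $\Loc_S(\cM)$ by construction of $Q$, so $Q(f)$ is a weak equivalence between $S$-fibrant objects and hence a weak equivalence in $\cM$. By hypothesis on $\{G_j\}_{j\in J}$ in $\cM$, the induced map
$$
\uMap(G_j,Q(F)) \too \uMap(G_j,Q(F'))
$$
is a stable weak equivalence for every $j$, which is precisely the map $f_\ast$ computed in the localized homotopy function spectrum.

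For the reverse direction, assume that for every $j$ the induced map $\uMap(G_j,Q(F)) \to \uMap(G_j,Q(F'))$ is a stable weak equivalence. By the $\cM$-homotopic-generator hypothesis applied to $Q(f):Q(F)\to Q(F')$, this forces $Q(f)$ to be a weak equivalence in $\cM$, hence in $\Loc_S(\cM)$. Combining with the fact that $F\to Q(F)$ and $F'\to Q(F')$ are weak equivalences in $\Loc_S(\cM)$, the two-out-of-three axiom applied to the square above shows that $f$ is a weak equivalence in $\Loc_S(\cM)$.

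This argument is essentially formal: the only substantive inputs are the identification of the homotopy function spectrum of a Bousfield localization and the agreement of weak equivalences between fibrant objects. There is no real obstacle, so I expect the proof to be quite short; the only point requiring mild care is ensuring that one always evaluates the homotopy function spectrum on $S$-fibrant replacements before invoking the $\cM$-level hypothesis.
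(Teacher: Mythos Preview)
Your proposal is correct and follows essentially the same route as the paper: use the fibrant replacement $Q$ in $\Loc_S(\cM)$, apply the $\cM$-level homotopic-generator hypothesis to $Q(f)$ to deduce that $Q(f)$ is a weak equivalence in $\cM$ (hence in $\Loc_S(\cM)$), and conclude by two-out-of-three in the square. The only difference is that you also spell out the forward direction, whereas the paper treats the ``only if'' as automatic (it is parenthetical already in Definition~\ref{def:HG}).
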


\begin{proof}
We use the homotopy function spectrum $\uMap(-,Q(-))$ in $\Loc_S(\cM)$ as in Remark~\ref{rem:HFS}. Let $f:F \to F'$ be a morphism in
$\cM$ which induces a stable equivalences under $\uMap(G_j,Q(-))$
for all $j\in J$. Consider the commutative square
$$
\xymatrix{
F \ar[d]_{f} \ar[r]^-{\sim} & Q(F) \ar[d]^{Q(f)} \\
F' \ar[r]_-{\sim} & Q(F')\,.
}
$$
Since by hypothesis the $\{G_j\}_{j \in J}$ are homotopic generators in $\cM$, the map $Q(f)$ is a weak equivalence in $\cM$ and so a
weak equivalence in $\Loc_S(\cM)$. By the two-out-of-three property,
we conclude that $f$ is a weak equivalence in $\Loc_S(\cM)$.
\end{proof}

%-----------------------------------------------------------------------
\section{Grothendieck Derivators: stabilization and localization}
\label{appendix}
%-----------------------------------------------------------------------
In this appendix we give a brief introduction to derivators, recall some basic facts, and then prove that the
operations of stabilization (see~\cite[\S\,8]{Additive}) and left
Bousfield localization (see~\cite[\S\,4]{Additive}) commute.

%--------------------------------
\subsection{Derivators}\label{sse:deriv}
%--------------------------------
The original reference is Grothendieck's
manuscript~\cite{Grothendieck}. See also Maltsiniotis~\cite{Malt} or
a short account in Cisinski-Neeman~\cite[\S\,1]{CN}.

Derivators originate in the problem of higher homotopies in derived categories.
For a non-zero triangulated category $\mathcal{D}$ and for $X$ a
small category, it essentially never happens that the diagram
category $\Fun(X,\cD)=\cD^X$ remains triangulated (it already fails for
the category of arrows in~$\cD$, that is, for
$X=[1]=(\bullet\to\bullet)$).

Now, very often, our triangulated category $\cD$ appears as the
homotopy category $\mathcal{D}=\Ho(\cM)$ of some
model~$\cM$. In this case, we can consider the category
$\Fun(X,\cM)$ of diagrams in $\cM$, whose homotopy category
$\Ho(\Fun(X,\cM))$ is often triangulated and provides a reasonable
approximation for $\Fun(X,\cD)$. More importantly, one can let
$X$ move. This nebula of categories $\Ho(\Fun(X,\cM))$, indexed by
small categories~$X$, and the various functors and natural
transformations between them is what Grothendieck formalized into
the concept of \emph{derivator}.

A derivator $\bbD$ consists of a
strict contravariant $2$-functor from the $2$-category of small
categories to the $2$-category of all categories (a.\,k.\,a.\ a
prederivator)
$$
\bbD: \Cat\op \longrightarrow \CAT,
$$
subject to certain conditions. We shall not list
them here for it would be too long but we refer to~\cite[\S\,1]{CN}. The
essential example to keep in mind is the derivator $\bbD=\HO(\cM)$
associated to a cofibrantly generated Quillen model category~$\cM$
and defined for every small category~$X$ by
\begin{equation}\label{eq:HO}
\HO(\cM)\ (X)=\Ho\big(\Fun(X\op,\cM)\big)\,.
\end{equation}
We denote by $e$ the $1$-point category with one object and one
identity morphism. Heuristically, the category $\bbD(e)$ is the
basic ``derived" category under consideration in the
derivator~$\bbD$. For instance, if $\bbD=\HO(\cM)$ then
$\bbD(e)=\Ho(\cM)$.

\begin{definitions}\label{def:srp}
We now recall three slightly technical properties of derivators.
\begin{enumerate}
\item
A derivator $\bbD$ is {\em strong} if for every finite free category
$X$ and every small category $Y$, the natural functor $ \bbD(X
\times Y) \longrightarrow \Fun(X\op,\bbD(Y))$ (see \cite[\S\,1.10]{CN}) is full and
essentially surjective.
\item
A derivator $\bbD$ is {\em pointed} if for any closed immersion $i:Z
\rightarrow X$ in $\Cat$ the cohomological direct image functor
$i_{\ast}:\bbD(Z) \longrightarrow \bbD(X)$ has a right adjoint, and
if moreover and dually, for any open immersion $j:U \rightarrow X$
the homological direct image functor $ j_!: \bbD(U) \longrightarrow
\bbD(X)$ has a left adjoint; see details in~\cite[Def.~1.13]{CN}.
\item A derivator $\bbD$ is {\em triangulated} or {\em stable} if it is
pointed and if every global commutative square in $\bbD$ is
cartesian exactly when it is cocartesian; see details
in~\cite[Def.~1.15]{CN}.
\end{enumerate}
\end{definitions}

\begin{remark}\label{rem:HO(M)}
If $\cM$ is a cofibrantly generated Quillen model category, \cite[Prop.~2.15]{catder} applied to all the homotopy categories $\Ho(\Fun(X\op,\cM))$ allows us to conclude that the derivator
$\HO(\mathcal{M})$ is strong. If
$\cM$ is pointed then so is~$\HO(\cM)$. Finally, if $\cM$ is a stable model category, then its
associated derivator $\HO(\cM)$ is triangulated. In short, the reader who
wishes to restrict attention to derivators of the form $\HO(\cM)$
can as well consider properties (1)-(3) of
Definition~\ref{def:srp} as mild ones.
\end{remark}

\begin{theorem}[Maltsiniotis~\cite{Malt1}]
For any triangulated derivator $\bbD$ and small category $X$ the category $\bbD(X)$ has a canonical triangulated structure.
\end{theorem}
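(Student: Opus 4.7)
The plan is to define a canonical suspension functor and a class of distinguished triangles on each $\bbD(X)$ using only the pointed, strong, and stable structure of~$\bbD$, and to verify the triangulated axioms diagrammatically, exploiting the key fact that in a stable derivator every bicartesian square is simultaneously cocartesian and cartesian.

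I would first construct the suspension $\Sigma\colon \bbD(X)\to\bbD(X)$. Let $\Gamma\subset[1]\times[1]$ be the full subcategory on the three vertices $(0,0)$, $(0,1)$, $(1,0)$. Given $A\in\bbD(X)$, the pointed and strong structure produces an object of $\bbD(X\times \Gamma)$ with $A$ at $(0,0)$ and zeros at the other two vertices; left Kan extension along $\Gamma\hookrightarrow [1]\times[1]$ and evaluation at $(1,1)$ define $\Sigma A$. Dually, a loop functor $\Omega$ is defined by right Kan extension from the opposite corner, and stability implies that the resulting adjunction $\Sigma\dashv\Omega$ is an equivalence, providing the shift of the triangulation. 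A triangle $A\to B\to C\to \Sigma A$ in $\bbD(X)$ is then declared \emph{distinguished} when it arises from an object of $\bbD(X\times [2]\times[1])$ whose underlying $3\times 2$ grid consists of two horizontally-composed bicartesian squares, with zeros forced at the upper-right corner of the first and the lower-left corner of the second: the first square exhibits $C$ as the cofiber of $A\to B$, and pasting with the defining square of $\Sigma A$ realizes the connecting morphism $C\to\Sigma A$.

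With this setup, axioms (TR1a,b) are formal. For (TR1c), strongness lifts any morphism $f\colon A\to B$ to an object of $\bbD(X\times[1])$, which after extension by zero and left Kan extension produces the required two-square diagram. For rotation (TR2), I would shift the defining diagram one column to the right and adjoin a fresh zero column; stability guarantees that the newly-formed square is again bicartesian, so the rotated triangle is distinguished. Axiom (TR3) follows from the universal property of left Kan extension applied to the cofiber construction, invoked for the source and target triangles simultaneously.

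The main obstacle will be the octahedral axiom (TR4). Starting from composable morphisms $A\to B\to D$ in $\bbD(X)$, I would build an object of $\bbD(X\times[2]\times[2])$---a $3\times 3$ grid whose top row and left column, beyond the initial data, are forced to be zero---and apply the pasting lemma for bicartesian squares iteratively, using stability in both directions, to conclude that each of the nine constituent squares is bicartesian. Reading off the four distinguished triangles associated to $A\to B$, $B\to D$, $A\to D$, and to the induced map between their cofibers, and then tracking the connecting morphisms across the grid, produces the octahedron. Finally, canonicity amounts to the fact that every pullback functor $u^*\colon \bbD(X)\to\bbD(Y)$ is exact; this is automatic because the entire construction uses only homotopy Kan extensions along functors of the form $\mathrm{id}\times v$, and these commute with $u^*$ by the base-change axioms of a derivator.
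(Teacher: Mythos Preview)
The paper does not give its own proof of this theorem: it is stated with attribution to Maltsiniotis~\cite{Malt1}, followed by a pointer to Cisinski--Neeman~\cite[\S\,1.19]{CN} for an explicit description of the triangulated structure. So there is no in-paper argument to compare against; you are in effect supplying the proof that the authors chose to outsource.

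Your outline is the standard one found in the cited references: suspension and loop via (co)cartesian squares on $[1]\times[1]$ with two zero vertices, distinguished triangles as pairs of pasted bicartesian squares, rotation by sliding the window one step, and the octahedron from a $3\times 3$ bicartesian grid. Two small points of precision. First, in constructing the object of $\bbD(X\times\Gamma)$ with $A$ at $(0,0)$ and zeros elsewhere, what you actually use is \emph{pointedness} (extension by zero is a right Kan extension along the inclusion of $(0,0)$, whose relevant comma categories are empty); \emph{strongness} only enters later, exactly where you say it does for (TR1c) and (TR4), to lift an incoherent arrow or composable pair in $\bbD(X)$ to a coherent diagram in $\bbD(X\times[1])$ or $\bbD(X\times[2])$. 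Second, your justification of (TR3) via ``the universal property of left Kan extension'' is a bit thin: the cofiber in $\bbD(X)$ is not literally a colimit there, and the fill-in map is produced by first lifting the given square to a coherent object of $\bbD(X\times[1]\times[1])$ (strongness again) and then applying the cofiber construction parametrically. With these adjustments the sketch is correct and matches the approach in~\cite{CN}.
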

(An explicit description of the triangulated structure is also given~\cite[\S\,7.9]{CN}.)

\begin{notation}
Let $\bbD$ and $\bbD'$ be derivators. We denote by $\uHom(\bbD,\bbD')$ the
category of all morphisms of derivators and by $\HomC(\bbD,\bbD')$ the
category of morphisms of derivators which preserve homotopy
colimits\,; see details in Cisinski~\cite[\S\,3.25]{Cisinski}.
\end{notation}

%--------------------------------
\subsection{Stabilization}\label{sse:St}
%--------------------------------
In \cite{Heller,Heller-AMS} Heller developed the notion of a {\em small homotopy theory} or in other words the notion of a {\em regular} derivator, \ie a derivator $\bbD$ where sequential homotopy colimits commute with finite products and homotopy pullbacks; see \cite[IV, Section~5]{Heller-AMS}. For instance, if $\cM$ is a cofibrantly generated model category where sequential homotopy colimits commute with finite products and homotopy pullbacks, then the associated derivator $\HO(\cM)$ is regular. By adopting the latter notation, Heller's work~\cite{Heller} can de described as follows: let $\bbD$ be a regular pointed strong derivator. Heller constructed the universal morphism $\stab: \bbD \to \St(\bbD)$ towards a triangulated strong derivator, which preserves homotopy colimits, and satisfies the following universal property.
\begin{theorem}[Heller~\cite{Heller}]\label{hel}
Let $\bbT$ be a triangulated strong derivator. Then the morphism $\stab: \bbD \to \St(\bbD)$ induces an equivalence of categories
$$ (\stab)^{\ast}: \HomC(\St(\bbD),\bbT) \stackrel{\sim}{\longrightarrow} \HomC(\bbD,\bbT)\,.$$
\end{theorem}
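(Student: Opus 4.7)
The plan is to exhibit a quasi-inverse to the functor $(\stab)^\ast$. Intuitively, $\St(\bbD)$ should be the universal triangulated derivator obtained from $\bbD$ by forcing the suspension $\Sigma$ to become invertible; since the target $\bbT$ is already triangulated and $F\colon\bbD\to\bbT$ preserves homotopy colimits (and in particular suspensions), any such $F$ should factor canonically through this ``localization''. First I would recall Heller's explicit model of $\St(\bbD)(X)$ as the category of $\Omega$-spectrum objects in $\bbD(X)$, namely sequences $E=(E_n)_{n\ge 0}$ equipped with structure isomorphisms $\sigma_n\colon E_n\isoto\Omega E_{n+1}$ in $\bbD(X)$, with $\stab(X)=(X,\Sigma X,\Sigma^2 X,\ldots)$ the suspension spectrum.

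Given $F\in\HomC(\bbD,\bbT)$ with $\bbT$ triangulated, I would define the extension $\tilde F\colon\St(\bbD)\to\bbT$ by the formula
\[
\tilde F(E) := \underset{n}{\hocolim}\,\Omega_\bbT^{\,n}\,F(E_n),
\]
with transition maps built from the $F(\sigma_n)$ and the canonical comparison $F(\Omega E_{n+1})\to \Omega_\bbT F(E_{n+1})$. Because $\Omega_\bbT$ is an equivalence on the triangulated derivator $\bbT$, this sequential homotopy colimit is well-defined and the formula is ``stable'' in $n$. The compatibility $\tilde F\circ\stab\simeq F$ is then a direct calculation, since $\tilde F(\stab(X))\simeq \hocolim_n\Omega_\bbT^{\,n}\Sigma_\bbT^{\,n} F(X)\simeq F(X)$: here we use that $F$ preserves $\Sigma$ (it being a homotopy pushout) and that $\Omega_\bbT$ is inverse to $\Sigma_\bbT$ in $\bbT$. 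Essential surjectivity of $(\stab)^\ast$ is then witnessed by the construction $F\mapsto\tilde F$, and fully faithfulness would follow from the observation that every $E\in\St(\bbD)(X)$ is a sequential homotopy colimit of shifted suspension spectra of the form $\Omega^n\stab(E_n)$, so two hocolim-preserving extensions to $\bbT$ which agree after restriction along $\stab$ must agree everywhere.

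The hard part, and what really carries the weight of Heller's theorem, is not the object-level construction but the verification that $\tilde F$ is genuinely a morphism of derivators, \ie\ a pseudo-natural transformation respecting the Beck-Chevalley base change axioms at every small category $X$, rather than merely a coherent family of functors on the $\bbD(X)$. This is precisely where the hypotheses that $\bbD$ be regular, pointed and strong intervene: regularity guarantees that the sequential homotopy colimit used to define $\tilde F$ commutes with the finite homotopy limits used to build $\Omega$ and the $\Omega$-spectrum condition, so the construction interacts well with base change; pointedness is what makes $\Omega$ (hence the whole spectrum construction) available; and strongness supplies enough coherence to handle diagrams of arbitrary shape. Carrying out these compatibilities in full is the technical heart of~\cite{Heller}, and a modern treatment in the formalism of \S\,\ref{sse:deriv} can be extracted from Cisinski-Neeman~\cite{CN} along the lines above.
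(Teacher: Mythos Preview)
The paper does not give a proof of this statement: Theorem~\ref{hel} is stated and attributed to Heller~\cite{Heller}, with no argument supplied. There is therefore no ``paper's own proof'' against which to compare your proposal.

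That said, your sketch is a faithful outline of Heller's original approach: model $\St(\bbD)$ via $\Omega$-spectrum objects, extend a given $F\in\HomC(\bbD,\bbT)$ by the formula $\tilde F(E)=\hocolim_n\,\Omega_\bbT^{\,n}F(E_n)$, and then verify the derivator coherences. You have correctly isolated the substantive point, namely that the real work lies in checking that $\tilde F$ is a genuine morphism of derivators (pseudo-naturality and the Beck--Chevalley conditions), and that this is where the regularity, pointedness and strongness hypotheses on $\bbD$ are consumed. One small caveat: your appeal to~\cite{CN} for a ``modern treatment'' is optimistic, since that paper uses stabilization as a black box rather than re-proving it; the detailed argument really does live in~\cite{Heller}. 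But as a summary of the strategy your proposal is accurate.
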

\smallskip

%--------------------------------
\subsection{Left Bousfield localization}
%--------------------------------
Let $\bbD$ be a derivator and $S$ a class of
morphisms in the base category $\bbD(e)$.

\begin{definition}\label{defCis}
The derivator $\bbD$ admits a {\em left Bousfield localization} with
respect to~$S$ if there exists a morphism of derivators $ \gamma : \bbD \rightarrow \Loc_S\bbD$,
which preserves homotopy colimits, sends the elements of $S$ to
isomorphisms in $\Loc_S\bbD(e)$, and satisfies the following
universal property\,: For every derivator $\bbD'$ the morphism
$\gamma$ induces an equivalence of categories
$$\gamma^{\ast}: \HomC(\Loc_S\bbD,\bbD')
  \stackrel{\sim}{\too}
  \uHom_{\,!,S}(\bbD,\bbD')\,,$$
where $\uHom_{\,!,S}(\bbD,\bbD')$ denotes
the category of morphisms of derivators which preserve homotopy
colimits and send the elements of $S$ to isomorphisms in $\bbD'(e)$.
\end{definition}

\begin{remark}\label{rem:regular}
Let $\cM$ be a left proper, cellular model category and $\Loc_S\cM$
its left Bousfield localization (see~\cite[Thm.~4.1.1]{Hirschhorn}) with
respect to a set of morphisms~$S$. Then, the induced morphism of
derivators $\HO(\cM)\to \HO(\Loc_S\cM)$ is a left Bousfield
localization of derivators with respect to the image of $S$ in
$\Ho(\cM)$\,; see~\cite[Thm.~4.4]{Additive}. Moreover, if the domains and codomains of the set $S$ are homotopically finitely presented objects (Def.\,\ref{def:h-fin}), the functor $\Ho(\Loc_S\cM) \to \Ho(\cM)$, right adjoint to the localization functor, preserves filtered homotopy colimits\,; see the proof of~\cite[Lem.~7.1]{Additive}. Under these hypotheses, if $\HO(\cM)$ is regular then so is $\HO(\Loc_S\cM)$.
\end{remark}

\begin{remark}\label{rem:triang}
By~\cite[Lem.~4.3]{Additive}, the Bousfield localization $\Loc_S\bbD$
of a \emph{triangulated} derivator $\bbD$ remains triangulated as
long as $S$ is stable under the loop space functor. For more
general~$S$, to remain in the world of \emph{triangulated}
derivators, one has to localize with respect to the set $\Omega(S)$
generated by $S$ and loops, as follows.
\end{remark}

\begin{proposition}\label{prop:omega}
Let $\bbD$ be a triangulated derivator and $S$ a class of morphisms
in~$\bbD(e)$. Let us denote by $\Omega(S)$ the smallest class of
morphisms in $\bbD(e)$ which contains $S$ and is stable under the
loop space functor $\Omega:\bbD(e) \rightarrow \bbD(e)$. Then, for
any \emph{triangulated} derivator~$\bbT$, we have an equivalence of
categories
\begin{equation}\label{eq:omega}
\uHom_{\,!, \Omega(S)}(\bbD,\bbT) \simeq \uHom_{\,!,S}(\bbD,\bbT)\,.
\end{equation}
Heuristically, $\Loc_{\Omega(S)}\bbD$ is the \emph{triangulated}
localization of $\bbD$ with respect to~$S$.
\end{proposition}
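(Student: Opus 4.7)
The plan is to show directly that the two full subcategories of $\uHom_{\,!}(\bbD,\bbT)$ appearing in~\eqref{eq:omega} are actually equal, not merely equivalent. The inclusion $\uHom_{\,!,\Omega(S)}(\bbD,\bbT) \subseteq \uHom_{\,!,S}(\bbD,\bbT)$ is trivial since $S \subseteq \Omega(S)$, so the whole content is the reverse inclusion: any homotopy-colimit-preserving morphism $F: \bbD \to \bbT$ that inverts $S$ already inverts $\Omega(S)$.

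My first step is to establish that such an $F$ automatically commutes with the loop space functor. The suspension $\Sigma$ in a pointed derivator is a specific homotopy colimit, namely the homotopy cofibre of the canonical map to the zero object; since $F$ preserves homotopy colimits by hypothesis, there is a canonical natural isomorphism $F\,\Sigma_\bbD \isoto \Sigma_\bbT\,F$. Both $\bbD$ and $\bbT$ are triangulated, so on each side the loop space functor $\Omega$ is an inverse equivalence to $\Sigma$. Composing the previous natural iso with $\Omega_\bbT$ on the left and using that $\Sigma_\bbD$ is essentially surjective produces a natural isomorphism $\Omega_\bbT\,F \isoto F\,\Omega_\bbD$, and by induction $\Omega_\bbT^{\,n}\,F \isoto F\,\Omega_\bbD^{\,n}$ for every $n \geq 0$.

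The second step is then purely formal. Since $\Omega(S) = \bigcup_{n \geq 0}\Omega_\bbD^{\,n}(S)$ by construction, every morphism in $\Omega(S)$ has the form $\Omega_\bbD^{\,n}(f)$ for some $f \in S$ and some $n \geq 0$. Applying $F$ and using the natural iso above gives $F(\Omega_\bbD^{\,n}(f)) \cong \Omega_\bbT^{\,n}(F(f))$, and the right-hand side is an isomorphism since $F(f)$ is one by assumption and $\Omega_\bbT$ is an equivalence. Hence $F$ lies in $\uHom_{\,!,\Omega(S)}(\bbD,\bbT)$, establishing the desired equality of subcategories (and a fortiori the asserted equivalence).

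The only delicate point in the argument is the commutation of $F$ with $\Omega$, which rests on two inputs: the preservation of the specific homotopy colimit defining the suspension, and the invertibility of that suspension in the codomain $\bbT$. Neither hypothesis can be relaxed as stated: dropping the triangulated assumption on $\bbT$ would prevent us from passing from $\Sigma$ to $\Omega$, which is precisely the phenomenon emphasised by Remark~\ref{rem:triang} and which motivates the introduction of $\Omega(S)$ in the first place.
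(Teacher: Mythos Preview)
Your proof is correct and follows essentially the same route as the paper's: both arguments observe that a homotopy-colimit-preserving morphism $F$ commutes with suspension, hence (by triangulatedness of $\bbD$ and $\bbT$) with the loop functor, and therefore inverts $\Omega(S)$ as soon as it inverts $S$. Your write-up is simply more explicit about the induction over $n$ and the structure of $\Omega(S)$ as $\bigcup_{n\geq 0}\Omega^n(S)$.
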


\begin{proof}
For $F$ an element of $\uHom_{\,!}(\bbD,\bbT)$, the functor
$F(e):\bbD(e)\to\mathbb{T}(e)$ preserves homotopy colimits,
hence it commutes in particular with the suspension functor. Since
both $\bbD$ and $\bbT$ are triangulated, suspension and loop space
functors are inverse to each other. Hence $F(e)$ also commutes
with~$\Omega$. It is then obvious that $F(e)$ sends $S$ to
isomorphisms if and only if it does so with~$\Omega(S)$.
\end{proof}

%--------------------------------
\subsection{Commuting stabilization with localization}
%--------------------------------

Let $\bbD$ be a pointed, strong and regular derivator and $S$ a class of morphisms in $\bbD(e)$.
Assume that $\bbD$ admits a left Bousfield localization
$\Loc_{S}\bbD$ with respect to~$S$. We then obtain a derivator
$\Loc_{S}\bbD$ which is still pointed and strong. If it is also
regular (see Remark~\ref{rem:regular}), we can consider its stabilization $\St(\Loc_S\bbD)$ as
in \S\,\ref{sse:St}.

On the other hand, we can first consider the triangulated derivator $\St(\bbD)$. We still denote by $S$ the image of the class $S$ under the morphism of derivators $\stab: \bbD \to \St(\bbD)$. Suppose that the left Bousfield localization $\Loc_{\Omega(S)}\St(\bbD)$ by $\Omega(S)$ also exists. We then have two constructions
$$
\xymatrix@R=1em@C=1em{
&& \kern1em \bbD \kern1em \ar[lld]_-{\gamma} \ar[rrd]^-{\stab}
\\
\Loc_S\bbD \ar[rd]_-{\stab}
&&&& \St(\bbD) \ar[ld]^-{\gamma}
\\
&\St(\Loc_S\bbD) \kern-1em
&& \kern-1em \Loc_{\Omega(S)}\St(\bbD)
}
$$
and we claim that they agree, namely\,:

\begin{theorem}\label{commut}
With the above notations and hypotheses, the derivators
$\Loc_{\Omega(S)}\St(\bbD)$ and $\St(\Loc_S\bbD)$ are canonically
equivalent, under~$\bbD$.
\end{theorem}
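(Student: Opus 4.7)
The plan is to show that both derivators represent the same $2$-functor on the $2$-category of triangulated strong derivators, and then to deduce the canonical equivalence from Yoneda applied to the universal properties themselves.

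The common universal property I expect to establish is: for every triangulated strong derivator $\bbT$, homotopy-colimit-preserving morphisms out of either candidate correspond naturally to elements of $\uHom_{\,!,S}(\bbD, \bbT)$, i.e., homotopy-colimit-preserving morphisms $\bbD \to \bbT$ which invert $S$. For $\St(\Loc_S\bbD)$, I would chain Heller's universal property (Theorem~\ref{hel}) with that of the Bousfield localization (Definition~\ref{defCis}):
$$
\HomC(\St(\Loc_S\bbD), \bbT) \;\simeq\; \HomC(\Loc_S\bbD, \bbT) \;\simeq\; \uHom_{\,!,S}(\bbD, \bbT).
$$
For $\Loc_{\Omega(S)}\St(\bbD)$, I would run the symmetric chain in the other order: Definition~\ref{defCis} first produces $\uHom_{\,!,\Omega(S)}(\St(\bbD), \bbT)$; then Proposition~\ref{prop:omega} identifies this with $\uHom_{\,!,S}(\St(\bbD), \bbT)$ since $\bbT$ is triangulated; finally Heller's theorem transports along $\stab$ to identify it with $\uHom_{\,!,S}(\bbD, \bbT)$.

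Both chains land on the same category $\uHom_{\,!,S}(\bbD,\bbT)$, and in both cases the identification is given by pre-composition with the canonical morphism $\bbD \to X$ (for $X$ either candidate), each of which preserves homotopy colimits and inverts $S$. Substituting $\bbT$ equal to the opposite candidate on each side then yields a pair of canonical morphisms of derivators under $\bbD$ going in opposite directions, and the uniqueness clauses of the universal properties force both composites to be the identity, producing the announced equivalence under $\bbD$.

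The main obstacles are twofold. First, I must verify that each candidate is a triangulated strong derivator, so that Heller's theorem applies with it as a target: for $\St(\Loc_S\bbD)$, Remark~\ref{rem:regular} ensures that $\Loc_S\bbD$ remains regular (the pointed and strong properties passing through the Bousfield localization), so Heller's construction lands in a triangulated strong derivator; for $\Loc_{\Omega(S)}\St(\bbD)$, Remark~\ref{rem:triang} gives triangulatedness because $\Omega(S)$ is stable under the loop space functor, while strongness is built into the standing hypothesis that the localization exists. Second, in the Heller step of the second chain, I need to check that the equivalence $F \mapsto F \circ \stab$ restricts to the subcategories of $S$-inverting morphisms on either side; this is essentially tautological, since $\stab$ is a morphism of derivators carrying the class $S \subset \bbD(e)$ to its image $S \subset \St(\bbD)(e)$, but it is precisely the step where the two universal properties are matched and so deserves to be spelled out.
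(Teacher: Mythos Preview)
Your proposal is correct and follows essentially the same approach as the paper: both argue via the Yoneda lemma by exhibiting, for every triangulated strong derivator $\bbT$, the same chain of equivalences $\HomC(-,\bbT)\simeq\uHom_{\,!,S}(\bbD,\bbT)$ obtained by composing Heller's universal property, the universal property of Bousfield localization, and Proposition~\ref{prop:omega}. The paper likewise singles out the restriction of $\stab^{\ast}$ to the $S$-inverting subcategories as the one step requiring comment, exactly as you do.
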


\begin{proof}
Both derivators are triangulated (for $\Loc_{\Omega(S)}\St(\bbD)$,
see Remark~\ref{rem:triang}) and strong. So, it suffices to show
that for any triangulated strong derivator~$\bbT$, we have the
following equivalences of categories:
$$
\xymatrix@R=1em@C=.5em{
&& \uHom_{\,!,S}(\bbD,\bbT)
\\
\HomC(\Loc_S\bbD,\bbT)\ar[rru]_-{\simeq}^(.4){\gamma^{\ast}}\kern-2em
&&&\kern-4em\uHom_{\,!,S}(\St(\bbD),\bbT)\ar@{=}[r]^-{\textrm{\eqref{eq:omega}}} \ar[lu]^(.65){\simeq}_(.4){\stab^{\ast}}
& \uHom_{\,!,\Omega(S)}(\St(\bbD),\bbT)
\\
&\HomC(\St(\Loc_S\bbD),\bbT) \kern-2em \ar[lu]_(.4){\simeq}^(.6){\stab^{\ast}}
&& \kern-2em \HomC(\Loc_{\Omega(S)}\St(\bbD),\bbT)\ar[ur]^(.4){\simeq}_(.6){\gamma^{\ast}}\,.
}
$$
The two equivalences on the left-hand side as well as the
lower-right one all follow from Theorem~\ref{hel} or
Definition~\ref{defCis}. Equivalence $\stab^{\ast}:
\uHom_{\,!,S}(\St(\bbD),\bbT)\isotoo \uHom_{\,!,S}(\bbD,\bbT)$ requires
a comment\,: By Theorem~\ref{hel} we have an equivalence
$\stab^{\ast}:\uHom_{\,!}(\St(\bbD),\bbT)\isotoo \uHom_{\,!}(\bbD,\bbT)$
and it is straightforward to check that it preserves the above subcategories.
\end{proof}

\medbreak\noindent\textbf{Acknowledgments:} The authors are grateful to
Bernhard Keller and Bertrand To\"en for useful discussions, Denis-Charles Cisinski for pointing
out a missing hypothesis in Theorem~\ref{commut}, Arthur Bartels and Wolfgang L{\"u}ck for precious comments on a previous draft, and Holger Reich for kindly providing Proposition~\ref{prop:reduction}. The second author would like to thank the UCLA Mathematics Department for hospitality and excellent working conditions, when this work was initiated.


\begin{thebibliography}{00}

\bibitem{BM} P.~Balmer and M.~Matthey, {\em Model theoretic reformulation of the Baum-Connes and Farrell-Jones
conjectures}, Adv.\ Math.\ {\bf 189} (2004),  no.\ 2, 495--500.

\bibitem{BL} A.~Bartels and W.~L{\"u}ck, {\em Isomorphism conjecture for homotopy $K$-theory and groups acting on trees}, J.~Pure~Appl.~Algebra {\bf 205} (2006), no.\ 3, 660--696.

\bibitem{BlumbergMandell} A.~Blumberg and M.~Mandell, {\em Localization theorems in topological Hochschild homology and topological cyclic homology}. Available at arXiv:$0802.3938$.

\bibitem{CT} D.-C.~Cisinski and G.~Tabuada, {\em Symmetric monoidal structure on Non-commutative motives}. Available at arXiv:1001.0228v2. To appear in Journal of $K$-theory.

\bibitem{Cisinski} D.-C.~Cisinski, {\em Propri{\'e}t{\'e}s universelles et extensions
  de Kan d{\'e}riv{\'e}es}, Theory and Appl. of Categories {\bf 20} (2008), no.\ 17, 605--649.

\bibitem{catder} \bysame, {\em Cat{\'e}gories d{\'e}rivables}. Bull. Soc. Math. France {\em 138} (2010), no.~3, 317--393.

\bibitem{CN} D.~Cisinski and A.~Neeman, {\em Additivity for derivator
    $K$-theory}, Adv.\ Math.\ {\bf 217} (2008), no.\ 4, 1381--1475.

\bibitem{DL} J.~Davis and W.~L{\"u}ck, \emph{Spaces over a category and assembly maps in isomorphism conjectures in $K$-theory and $L$-theory}. $K$-theory, {\bf 15} (1998), 201--252.

\bibitem{GJ} P. Goerss and J. Jardine, {\em Simplicial homotopy theory}, Progress in Mathematics {\bf 174}, Birkh{\"a}user.

\bibitem{Grothendieck} A.~Grothendieck, {\em Les D{\'e}rivateurs}, available at {\tt http://people.math.jussieu.fr/$\sim$malt\\sin/groth/Derivateurs.html}.

\bibitem{HP} I.~Hambleton and E.~Pedersen, {\em Identifying assembly maps in K- and L-theory}. Math. Ann. {\bf 328} (2004), no. 1-2, 27--57.

\bibitem{Heller} A.~Heller, \emph{Stable homotopy theories and
    stabilization}, J.~Pure~Appl.~Algebra {\bf 115} (1997), 113--130.

\bibitem{Heller-AMS} \bysame, {\em Homotopy theories}. Mem. Amer. Math. Soc. {\bf 71} (1988), no. 383.

\bibitem{Hirschhorn} P.~Hirschhorn, \emph{Model categories and
    their localizations}, Mathematical Surveys and Monographs, {\bf
    99}, American Mathematical Society, 2003.

\bibitem{Jardine1} J.~Jardine, {\em Presheaves of spectra}, lectures at the Fields Institute. Available at {\tt http://www.math.uwo.ca/$\sim$jardine/index.html}

\bibitem{ICM} B.~Keller, {\em On differential graded
    categories}, International Congress of Mathematicians (Madrid), Vol.~II,
  151--190, Eur.~Math.~Soc., Z{\"u}rich, 2006.

\bibitem{cyclic} \bysame, {\em On the cyclic homology of exact
    categories}, J.~Pure~Appl.~Algebra {\bf 136} (1999), no.\ 1, 1--56.

\bibitem{Luck} W.~L{\"u}ck, \emph{On the Farrell-Jones and related conjectures}. Cohomology of groups and algebraic $K$-theory, 269--341, Adv. Lect. Math. (ALM), {\bf 12}, Int. Press, 2010.

\bibitem{ICMLuck} \bysame, \emph{$K-$ and $L$-theory of group rings}. Proceedings of the International Congress of Mathematicians Hyderabad 2010, vol. II, 1071--1098, World Scientific, 2011.

\bibitem{LR} W.~L{\"u}ck and H.~Reich, {\em Detecting $K$-theory by cyclic homology}, Proc.~London~Math.~Soc. {\bf 93}(3) (2006), no.\ 3, 593--634.

\bibitem{Handbook} \bysame, \emph{The Baum-Connes and the Farrell-Jones conjectures in $K$- and $L$-theory}, Handbook of $K$-theory. Vol.\ 2, 703--842, Springer, Berlin, 2005.

\bibitem{Malt1} G.~Maltsiniotis, {\em Structure triangul{\'e}e sur les cat{\'e}gories de coefficients de d{\'e}rivateurs triangul{\'e}s}, Series of lectures at the seminar Alg{\`e}bre et topologie homotopiques, Paris (notes in preparation), $2001$.

\bibitem{Malt} \bysame, {\em Introduction {\`a} la th{\'e}orie des d{\'e}rivateurs (d'apr{\`e}s Grothendieck)}, available at {\tt http://people.math.jussieu.fr/$\sim$maltsin/textes.html}.

\bibitem{Neeman} A.~Neeman, {\em Triangulated categories}, Annals of Mathematics Studies {\bf 148}. Princeton University Press, 2001.

\bibitem{Pedersen-Weibel} E.~Pedersen and C.~Weibel, {\em
A non-connective delooping of algebraic $K$-theory}, Lecture
Notes in Mathematics {\bf 1126}, 166--181, Springer, 1985.

\bibitem{Quillen} D.~Quillen, {\em Homotopical algebra}, Lecture Notes
  in Mathematics, {\bf 43}, Springer-Verlag, 1967.

\bibitem{Marco} M.~Schlichting, {\em Negative $\mbox{K}$-theory of
    derived categories}, Math.~Z. {\bf 253} (2006), no.\ 1, 97--134.

\bibitem{MacLane} G.~Tabuada, {\em Generalized spectral categories, topological
Hochschild homology, and trace maps}, Algebraic and Geometric Topology {\bf 10} (2010), 137--213.

\bibitem{Additive} \bysame, {\em Higher $K$-theory via universal invariants}, Duke Math. J. {\bf 145} (2008), no.\ 1, 121--206.

\bibitem{IMRN} \bysame, {\em Invariants additifs de dg-cat{\'e}gories}, Int.~Math.~Res.~Not. {\bf 53} (2005), 3309--3339.

\bibitem{Thesis} \bysame, {\em Th{\'e}orie homotopique des DG-cat{\'e}gories}, Ph.D thesis. Available at arXiv:$0710.4303$.

\bibitem{Toen} B.~To{\"e}n, {\em The homotopy theory of dg-categories and
    derived Morita theory}, Invent. Math. {\bf 167} (2007), no.~3, 615--667.

\bibitem{TV} B.~To{\"e}n and M.~Vaqui{\'e}, {\em Moduli of objects in dg-categories}, Ann. Sci. de l'ENS {\bf 40} (2007) no.\ 3, 387--444.

\bibitem{Wald} F.~Waldhausen, {\em Algebraic K-theory of spaces},
  Algebraic and geometric topology (New Brunswick, N.~J., 1983),
  318--419, Lecture Notes in Math., {\bf 1126}, Springer, Berlin, 1985.

\bibitem{Weibel} C.~Weibel, {\em Homotopy algebraic $K$-theory}, Contemporary Mathematics, {\bf 83} (1989), 461--488.

\end{thebibliography}
\end{document}